\numberwithin{equation}{section}
\DeclareFontFamily{OT1}{rsfs}{}
\DeclareFontShape{OT1}{rsfs}{n}{it}{<-> rsfs10}{}
\DeclareMathAlphabet{\mathscr}{OT1}{rsfs}{n}{it}
\newcommand{\cO}{\mathcal{O}}
\newcommand{\cL}{\mathcal{L}}
\newcommand{\ds}{\displaystyle}
\newcommand{\ra}{\rightarrow}
\newcommand{\eps}{\varepsilon}
\newcommand{\vphi}{\varphi}
\newcommand{\comment}[1]{}
\newcommand{\Q}{\mathbf{Q}}
\newcommand{\R}{\mathbf{R}}
\newcommand{\cS}{\mathcal{S}}
\newcommand{\C}{\mathbf{C}}
\newcommand{\cT}{\mathcal{T}}
\newcommand{\Z}{\mathbf{Z}}
\newcommand{\F}{\mathbf{F}}
\newcommand{\Fbar}{\overline{\F}}
\newcommand{\isom}{\cong}
\renewcommand{\H}{\HH}
\DeclareMathOperator{\Tr}{Tr}
\DeclareMathOperator{\ord}{ord}
\DeclareMathOperator{\Gal}{Gal}
\DeclareMathOperator{\HH}{H}
\DeclareMathOperator{\Hom}{Hom}
\DeclareMathOperator{\End}{End}
\DeclareMathOperator{\Jac}{Jac}
\DeclareMathOperator{\Cl}{Cl}
\theoremstyle{plain} 
\newtheorem{thm}{Theorem}[section] 
\newtheorem{cor}[thm]{Corollary}
\newtheorem{lem}[thm]{Lemma}
\theoremstyle{definition} 
\newtheorem{defn}[thm]{Definition} 
\theoremstyle{remark} 
\newtheorem{rem}{Remark}
\newtheorem{example}{Example}
\newcounter{tasknumber}
\newcommand{\task}[2][]{%
  \addtocounter{tasknumber}{1}%
  \begin{center}%
  \framebox[1.1\width]{\begin{minipage}{0.9\textwidth}%
  \textbf{Task \arabic{tasknumber}} \textit{\if!#1(unassigned)!\else (#1)\fi}: {#2}%
  \end{minipage}}%
  \end{center}%
}
\newcounter{assumptionnumber}
\newcommand{\assumption}[2][]{%
  \addtocounter{assumptionnumber}{1}%
  \begin{center}%
  \framebox[1.1\width]{\begin{minipage}{0.9\textwidth}%
  \textbf{Assumption \arabic{assumptionnumber}} \textit{\if!#1!\else (#1)\fi}: {#2}%
  \end{minipage}}%
  \end{center}%
}
\newcommand*{\Cay}{\mathrm{Cay}}
\newcommand*{\frakl}{\mathfrak{l}}
\begin{document}

\title[Horizontal isogeny graphs of ordinary abelian varieties]{Horizontal isogeny graphs of ordinary abelian varieties and the discrete logarithm problem}


\author{Dimitar Jetchev}
\email{dimitar.jetchev@epfl.ch}
\address{Ecole Polytechnique F\'ed\'erale de Lausanne, EPFL SB GR-JET, Switzerland}


\author{Benjamin Wesolowski}
\email{benjamin.wesolowski@epfl.ch}
\address{Ecole Polytechnique F\'ed\'erale de Lausanne, , EPFL IC LACAL, Switzerland}

\date{\today}

\keywords{Isogeny, expander graph, hyperelliptic curve cryptography, random self-reducibility, discrete logarithm}

\begin{abstract}
Fix an ordinary abelian variety defined over a finite field. The ideal class group of its endomorphism ring acts freely on the set of isogenous varieties with same endomorphism ring, by complex multiplication. Any subgroup of the class group, and generating set thereof, induces an isogeny graph on the orbit of the variety for this subgroup. We compute (under the Generalized Riemann Hypothesis) some bounds on the norms of prime ideals generating it, such that the associated graph has good expansion properties.

We use these graphs, together with a recent algorithm of Dudeanu, Jetchev and Robert for computing explicit isogenies in genus 2, to prove random self-reducibility of the discrete logarithm problem within the subclasses of principally polarizable ordinary abelian surfaces with fixed endomorphism ring. In addition, we remove the heuristics in the complexity analysis of an algorithm of Galbraith for explicitly computing isogenies between two elliptic curves in the same isogeny class, and extend it to a more general setting including genus~2.
\end{abstract}

\maketitle

\section{Introduction}

\subsection{Motivation}\label{subsec:motivation}
Let $\mathscr C$ be a hyperelliptic curve of genus $g$ over defined over a finite field $\mathbf F_q$ and let $\mathscr J = \Jac(\mathscr C)$ be its Jacobian  
-- a principally polarized abelian surface over $\F_q$. The discrete logarithm problem (or DLP) in genus $g$ is the following: given $P \in \mathscr J(\F_q)$ and $Q = rP \in \mathscr J(\F_q)$ for some secret multiplier $r$, compute~$r$. The problem for $g = 1$ is known as the elliptic curve discrete logarithm problem (or ECDLP); it is a central tool in public key cryptography, and has been extensively studied since its introduction in the 1980's~\cite{Miller1986,koblitz1987elliptic}. The case of $g=2$ has been shown to be a promising alternative, allowing very efficient arithmetic~\cite{gaudry:fastgen2,Bos2016}, but very little is known about the hardness of the corresponding version of the DLP. Apart from the question of the hardness of the problem on a particular Jacobian, one may ask how the difficulty of the problem compares on two distinct Jacobians.
A natural way of transferring the problem from one Jacobian to another is via isogenies.
It is thus of interest to study whether two Jacobians of genus 2 curves have the same difficulty of the problem, assuming that there exists an isogeny between them. 
Tate's isogeny theorem~\cite{tate:isogeny} implies that two abelian surfaces over a finite field are isogenous if and only if the characteristic polynomials of the Frobenius acting on their $\ell$-adic Tate modules are the same.
The latter can be computed efficiently, so it is easy to determine if two Jacobians are isogenous. It is however not clear how to explicitly compute an isogeny between two such Jacobians, which is actually needed to transfer the discrete logarithm problem. 

The case of ordinary elliptic curves has been treated by Jao, Miller and Venkatesan \cite{jao-miller-venkatesan,JMV09} using random walks on isogeny graphs and rapid mixing arguments.
A crucial ingredient in their analysis is that one can efficiently compute isogenies of small degrees, polylogarithmic in $q$. More precisely, one considers a graph with vertices the set of isomorphism classes of elliptic curves in the isogeny class that have a fixed endomorphism ring. These isomorphism classes correspond, by CM theory, to the ideal classes of that endomorphism ring. The edges of the graph correspond to \emph{horizontal} isogenies, that is, $\mathfrak a$-transforms in the language of \cite{taniyama-shimura}. It turns out that it is connected for suitably chosen bounds on the ideal norms and, under GRH, it rapidly mixes random walks (i.e., behaves as an expander graph). Via random walks on this graph, one can show that it is possible to reduce the discrete logarithm problem from a given curve to the problem to a uniformly random curve in that class, thus obtaining random self-reducibility of the elliptic curve discrete logarithm problem within the class.  

The similar problem in genus 2 is much more challenging since, unlike elliptic curves, abelian surfaces are not \emph{a priori} principally polarized, so a quotient of a Jacobian by a finite subgroup need not be the Jacobian of a curve.
Even if it is, there might be multiple non-equivalent principal polarizations giving rise to non-isomorphic curves\footnote{An example of this phenomenon has been given by Howe \cite{howe:distcurves}. More precisely, Howe showed that the curves $y^2=x^5+x^3+x^2 - x - 1$ and $y^2=x^5 - x^3+x^2 - x - 1$ over $\F_{11}$ are not isomorphic; yet, their Jacobians are absolutely simple and isomorphic as non-polarized abelian surfaces.}.
In addition, if one tries the straightforward analogy to~\cite{jao-miller-venkatesan} of constructing isogeny graphs with vertices that are ideal classes in the class group of the endomorphism algebra (in this case, a quartic CM-field), one may get abelian surfaces that are not even principally polarizable and hence, unsuitable for transferring the discrete logarithm problem in practice. 
Finally, even if the target is principally polarizable, for the purpose of proving random self-reducibility, one does not need just one principal polarization on the target, but all of them, or at least the capability to sample one uniformly at random. 

\subsection{Main theorem}\label{subsec:mainthm}

Jacobians of genus 2 hyperelliptic curves will be seen as a particular case of the following, more general situation. Let $\mathscr A$ be an absolutely simple, ordinary abelian variety of dimension $g$ over a finite field and let 
$K = \End(\mathscr A) \otimes \Q$ be the corresponding CM field. The endomorphism ring $\End(\mathscr A)$ 
is isomorphic to an order $\cO$ of conductor $\mathfrak f$ in $K$. The ideal class group $\Cl(\cO)$ acts freely on the set of varieties isogenous to $\mathscr A$ with same endomorphism ring $\cO$, by complex multiplication. Let $H \subseteq \Cl(\cO)$ be any subgroup and let $H(\mathscr A)$ the $H$-orbit of $\mathscr A$. The choice of a set $\mathcal S$ of invertible ideals in $\cO$ generating $H$ induces a graph whose set of vertices is $H(\mathscr A)$ and whose edges are labelled with isogenies between these abelian varieties. The norms of the ideals in $\mathcal S$ are exactly the degrees of the induced isogenies. For any $B>0$ and ideal $\mathfrak m$ in $\cO$, let $\mathcal S_B$ be the set of ideals in $\cO$ of prime norm and coprime to~$\mathfrak f\mathfrak m$. Let $\mathscr G_B$ be the induced isogeny graph, where all the degrees are bounded by~$B$.

\begin{thm}[Rapid mixing for $H(\mathscr A)$]\label{thm:rapidmixgeneral}
Assuming the Generalized Riemann Hypothesis, for any $\eps > 0$, there exists a bound 
$$
B = O \left(\left(g[\Cl(\cO) : H]\ln (d_KN(\mathfrak f\mathfrak m))\right)^{2 + \varepsilon}\right), 
$$ 
such that for any subset $W$
of $H(\mathscr A)$, any random walk in $\mathscr G_B$ of length at least $\ln(2|H|/|W|^{1/2})$ starting from a given vertex will end in $W$ with probability between $|W|/(2|H|)$ and $3|W|/(2|H|)$. In particular, the regular graph $\mathscr G_B$ is connected and rapidly mixes random walks. 
\end{thm}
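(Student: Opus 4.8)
The plan is to realize $\mathscr G_B$ as a Cayley multigraph of the finite abelian group $H$ and to exhibit, under GRH, a spectral gap by means of an effective prime ideal theorem for Hecke $L$-functions of $K$. First I would fix $\mathscr A$ as a basepoint and use freeness of the $\Cl(\cO)$-action to identify the torsor $H(\mathscr A)$ with $H$. An ideal $\mathfrak p$ of prime norm $\le B$ coprime to $\mathfrak f\mathfrak m$ with $[\mathfrak p]\in H$ contributes, at each vertex, an edge translating by $[\mathfrak p]$, whereas an ideal whose class lies outside $H$ acts outside $H(\mathscr A)$ and contributes no edge. Since the complex conjugate $\overline{\mathfrak p}$ has the same norm, is coprime to $\mathfrak f\mathfrak m$, satisfies $[\overline{\mathfrak p}]=[\mathfrak p]^{-1}$, and $\mathfrak p\overline{\mathfrak p}=(N\mathfrak p)$ acts trivially, the generating multiset $S=\{[\mathfrak p]^{\pm1}:\mathfrak p\in\mathcal S_B,\ N\mathfrak p\le B,\ [\mathfrak p]\in H\}$ is symmetric; hence $\mathscr G_B$ is an undirected $k$-regular multigraph with $k=|S|=2\,\#\{\mathfrak p\}$, and its adjacency operator is diagonalized by the characters $\chi\in\widehat H$ with eigenvalue $\lambda_\chi=\sum_{s\in S}\chi(s)=2\,\mathrm{Re}\sum_{N\mathfrak p\le B,\,[\mathfrak p]\in H}\chi([\mathfrak p])$; in particular $\lambda_1=k$.

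The core of the argument is to estimate these eigenvalues. The condition $[\mathfrak p]\in H$ I would detect by orthogonality, writing $\mathbf{1}_{[\mathfrak p]\in H}=[\Cl(\cO):H]^{-1}\sum_\psi\psi([\mathfrak p])$ over the $[\Cl(\cO):H]$ characters $\psi$ of $\Cl(\cO)$ trivial on $H$; lifting $\chi$ to a character $\widetilde\chi$ of $\Cl(\cO)$ then expresses $\lambda_\chi$ as $[\Cl(\cO):H]^{-1}$ times a sum, over the characters $\widetilde\chi\psi$ of $\Cl(\cO)$ (all nontrivial when $\chi\ne1$), of $\sum_{N\mathfrak p\le B}(\widetilde\chi\psi)([\mathfrak p])$, modulo an $O(g\sqrt B+\log N(\mathfrak f\mathfrak m))$ correction for the primes of residue degree $\ge2$ and those dividing $\mathfrak f\mathfrak m$. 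Since $\Cl(\cO)$ is a quotient of the ray class group of $\cO_K$ of modulus $\mathfrak f$, a character of $\Cl(\cO)$ is a finite-order Hecke character of $K$ of conductor dividing $\mathfrak f$, whose $L$-function has degree $2g$ and analytic conductor $\le d_KN(\mathfrak f)$; invoking GRH and the explicit-formula bound (Lagarias--Odlyzko, Iwaniec--Kowalski), and summing by parts to remove the $\log N\mathfrak p$-weight, gives
\[
\Big|\sum_{N\mathfrak p\le B}(\widetilde\chi\psi)([\mathfrak p])\Big|\ll\sqrt B\,\log\big(d_KN(\mathfrak f\mathfrak m)B^{2g}\big),
\]
hence $|\lambda_\chi|\ll\sqrt B\,\log(d_KN(\mathfrak f\mathfrak m)B^{2g})$ for $\chi\ne1$; applying the same input to the trivial character, and using that degree-one primes dominate the prime ideal count of $\cO_K$, one obtains
\[
\lambda_1=k\ \gg\ \frac{1}{[\Cl(\cO):H]}\cdot\frac{B}{\log B}
\]
as soon as $B$ exceeds a suitable power of $[\Cl(\cO):H]\,g\,\log(d_KN(\mathfrak f\mathfrak m))$.

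Then I would take $B=C\big(g[\Cl(\cO):H]\ln(d_KN(\mathfrak f\mathfrak m))\big)^{2+\varepsilon}$ with $C=C(\varepsilon)$ large enough; since $\log B=O\big(\log(g[\Cl(\cO):H]\ln(d_KN(\mathfrak f\mathfrak m)))\big)$ is absorbed into the exponent, comparing the two displays yields $\max_{\chi\ne1}|\lambda_\chi|\le\lambda_1/e=k/e$. In particular every eigenvalue other than $\lambda_1=k$ has absolute value $<k$, so $\mathscr G_B$ is connected and non-bipartite; and writing $P=k^{-1}A$ for the random-walk operator and $u$ for the uniform distribution, the spectral decomposition gives $\|P^\ell\delta_x-u\|_2\le(k^{-1}\max_{\chi\ne1}|\lambda_\chi|)^\ell\le e^{-\ell}$, so for any $W\subseteq H(\mathscr A)$,
\[
\big|P^\ell(x,W)-|W|/|H|\big|=\big|\langle P^\ell\delta_x-u,\ \mathbf{1}_W\rangle\big|\le|W|^{1/2}e^{-\ell}\le\frac{|W|}{2|H|}
\]
as soon as $\ell\ge\ln(2|H|/|W|^{1/2})$, which gives the asserted two-sided bound $|W|/(2|H|)\le P^\ell(x,W)\le 3|W|/(2|H|)$.

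The step I expect to be the main obstacle is the effective prime ideal estimate: obtaining the GRH-conditional bound with the correct simultaneous dependence on the degree $2g$, the discriminant $d_K$ and the conductor $N(\mathfrak f)$, and — intertwined with it — descending carefully from the class group of the possibly non-maximal order $\cO$ to Hecke characters of $\cO_K$ of modulus dividing $\mathfrak f$ (so that GRH applies and the conductor stays controlled), all the while checking that the primes of residue degree $\ge2$ and the primes dividing $\mathfrak f\mathfrak m$ are genuinely negligible in both the upper and the lower estimate.
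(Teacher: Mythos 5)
Your proposal is correct and follows essentially the same route as the paper: identify $\mathscr G_B$ with a Cayley multigraph of $H$, diagonalize by characters, detect the condition $[\mathfrak p]\in H$ by orthogonality over characters of $\Cl(\cO)/H$, reduce to Hecke character sums of conductor dividing $\mathfrak f\mathfrak m$ estimated under GRH via \cite[Th.~5.15]{IK04} (with the same passage from von Mangoldt weights to degree-one primes and partial summation), then choose $B$ to force a spectral gap and conclude with the standard $\ell^2$ mixing lemma. The only difference is organizational: the paper proves the eigenvalue estimate first for subgroups of narrow ray class groups (Theorem~\ref{thm:eigenCharacGeneral}) and then descends to $\Cl(\cO)$ via the surjection $\Cl_{\mathfrak f\mathfrak m}(K)\to\Cl(\cO)$, whereas you lift characters of $\Cl(\cO)$ directly; these are equivalent.
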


It is worth noticing that even the connectivity of the graph is new: the classical bounds for connectivity are derived from Bach's bounds~\cite{Ba90}, which can only be applied when $H$ is the full class group $\Cl(\cO)$.
We will prove Theorem~\ref{subsec:mainthm} as a corollary of the following theorem. It constructs and proves that certain Cayley graphs for subgroups of more general ray class groups are expanders.

\begin{thm}\label{thm:eigenCharacGeneral}
Let $K$ be a number field of degree $n$ and discriminant $d_K$, $\mathfrak m$ an integral ideal of $\cO_K$, $G$ the narrow ray class group of $K$ modulo $\mathfrak m$, and $H$ a subgroup of $G$. For any ideal $\mathfrak l$ of $\cO_K$ coprime to $\mathfrak m$, let $[\mathfrak l]$ denote its image in $G$. Let 
$$
\cT_{H, \mathfrak m}(B) = \{\text{prime ideals }\mathfrak l \text{ of }\cO_K \mid (\mathfrak l, \mathfrak m) = 1, N\mathfrak l \leq B \text{ is prime}\text{ and } [\mathfrak l] \in H\}.
$$
Let $T_{H, \mathfrak m} (B)$ be the multiset of its image in $G$. Let $\mathscr G_B$ be the graph whose vertices are the elements of $H$ and whose non-oriented edges are precisely $(h, sh)$ for any $h \in H$ and $s \in T_{H, \mathfrak m}(B)$.
Assuming the Generalized Riemann Hypothesis, for any character $\chi$ of $H$, the corresponding eigenvalue $\lambda_\chi$ of the Cayley graph $\mathscr G_B$ satisfies
$$
\lambda_{\chi} = \frac{\delta(\chi)}{[G:H]}\mathrm{li}(B) + O\left(nB^{1/2}\ln(Bd_KN\mathfrak m)\right),
$$
where $\delta(\chi)$ is $1$ if $\chi$ is trivial, and $0$ otherwise. The implied constants are absolute.
\end{thm}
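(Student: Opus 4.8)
The plan is to compute the eigenvalue $\lambda_\chi$ by relating it to a sum of Hecke characters over primes, and then to invoke the GRH-conditional bound on such sums. Recall that for a Cayley graph on an abelian group $H$ with connection multiset $T$ (symmetric), the eigenvalues are indexed by characters $\chi$ of $H$ and given by $\lambda_\chi = \sum_{s\in T}\chi(s)$. So the first step is to write
$$
\lambda_\chi = \sum_{\mathfrak l\in\cT_{H,\mathfrak m}(B)}\chi([\mathfrak l]),
$$
the sum running over prime ideals $\mathfrak l$ coprime to $\mathfrak m$ with prime norm at most $B$ whose class lies in $H$, where I abuse notation and also write $\chi$ for the pullback of the character of $H$ along $[\,\cdot\,]$.

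The second step is to detect the condition $[\mathfrak l]\in H$ by orthogonality of characters on the quotient $G/H$: extend each character $\chi$ of $H$ to the $[G:H]$ characters $\tilde\chi$ of $G$ restricting to it, and write
$$
\mathbf 1_{[\mathfrak l]\in H}\,\chi([\mathfrak l]) = \frac{1}{[G:H]}\sum_{\tilde\chi|_H = \chi}\tilde\chi([\mathfrak l]).
$$
Each $\tilde\chi$ is a character of the narrow ray class group $G$ modulo $\mathfrak m$, hence corresponds to a Hecke character of $K$ of modulus dividing $\mathfrak m\cdot\infty$. Substituting and swapping sums gives
$$
\lambda_\chi = \frac{1}{[G:H]}\sum_{\tilde\chi|_H=\chi}\ \sum_{\substack{\mathfrak l\text{ prime},\ (\mathfrak l,\mathfrak m)=1\\ N\mathfrak l\le B\text{ prime}}}\tilde\chi([\mathfrak l]).
$$
The inner sum is a prime-counting sum twisted by a Hecke character, restricted to degree-one primes (those of prime norm) — the contribution of higher-degree primes up to $B$ being $O(\sqrt B\log B)$ trivially, and absorbable.

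The third step is the analytic input: under GRH for the Hecke $L$-function $L(s,\tilde\chi)$, the twisted prime sum $\sum_{N\mathfrak l\le B}\tilde\chi([\mathfrak l])\Lambda(\mathfrak l)$ (von Mangoldt weighted) equals $\delta(\tilde\chi)\,B + O(\sqrt B(\log B)(\log(B\,d_K\,N\mathfrak m)))$ by the standard explicit-formula/contour-shift argument, where the discriminant and conductor enter through the analytic conductor of $\tilde\chi$ and a count of zeros via a Riemann–von Mangoldt formula; partial summation converts this to a statement about $\sum_{N\mathfrak l\le B,\ \text{prime norm}}\tilde\chi([\mathfrak l])$ with main term $\delta(\tilde\chi)\,\mathrm{li}(B)$. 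Summing over the $[G:H]$ extensions $\tilde\chi$ of $\chi$: exactly one of them is trivial when $\chi$ is trivial (contributing $\mathrm{li}(B)$), and none is trivial when $\chi$ is nontrivial; the error terms add up to $[G:H]$ times the individual bound, which cancels the $1/[G:H]$ prefactor, yielding
$$
\lambda_\chi = \frac{\delta(\chi)}{[G:H]}\mathrm{li}(B) + O\!\left(nB^{1/2}\ln(B\,d_K\,N\mathfrak m)\right),
$$
where the factor $n=[K:\Q]$ arises because each individual error carries a factor like $\log(\mathrm{cond})\asymp \log d_K+\log N\mathfrak m+n\log(\cdot)$ and, in the Riemann–von Mangoldt zero-count, a term proportional to $n\log B$.

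The main obstacle — and the part requiring genuine care rather than bookkeeping — is making the dependence on $d_K$, $N\mathfrak m$, and especially $n$ fully explicit and \emph{absolute} in the constants, uniformly over all Hecke characters $\tilde\chi$ of modulus dividing $\mathfrak m\infty$. The cleanest route is to cite a ready-made effective GRH-conditional prime ideal theorem for Hecke $L$-functions with explicit conductor dependence (of Lagarias–Odlyzko / Weinberger / Oesterlé type), stated for von Mangoldt-weighted sums, and then do the (routine but nontrivial) partial summation and the (routine) separation of prime-norm primes from prime-power-norm primes. One must also check that extending characters from $H$ to $G$ and pulling back along $[\,\cdot\,]\colon \{\text{ideals prime to }\mathfrak m\}\to G$ is compatible, i.e. that $\tilde\chi([\mathfrak l])$ is exactly the value of the associated Hecke character on $\mathfrak l$ — this is the definition of the narrow ray class group, so it is immediate, but it is the conceptual hinge that turns a combinatorial eigenvalue into an analytic object.
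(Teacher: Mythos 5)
Your proposal is correct and takes essentially the same route as the paper's proof: you detect the condition $[\mathfrak l]\in H$ by character orthogonality on $G/H$ (summing over the $[G:H]$ extensions of $\chi$ to $G$, which is exactly the paper's sum over $\theta\in\widehat{G/H}$ multiplied against a fixed extension), apply a GRH-conditional effective prime ideal theorem for Hecke characters to the von Mangoldt-weighted sums (the paper cites Iwaniec--Kowalski, Th.~5.15, which already carries the absolute constants and the $n$, $d_K$, $N\mathfrak m$ dependence you flag as the delicate point), strip the prime-power and higher-degree contributions at cost $O(nB^{1/2})$, and finish by partial summation. There is no substantive difference in strategy.
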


Note that a similar result is proven in~\cite{JMV09}, where $H$ is the full narrow ray class group, rather than a subgroup.
It was sufficient to study isogeny graphs of elliptic curves, which can be represented as Cayley graphs of class groups in imaginary quadratic fields. However, it is not strong enough for higher genus, where one needs to work on subgroups of class group of CM-fields, due to the extra condition of principal polarizability. Since properties of expander graphs do not transfer nicely to subgraphs in general, the refinement provided by Theorem~\ref{thm:eigenCharacGeneral} is crucial.

\subsection{Applications of Theorem~\ref{thm:rapidmixgeneral}}\label{subsec:apps}

Using the CM theory for polarized class groups, we will apply Theorem~\ref{thm:rapidmixgeneral} to analyse isogeny graphs of Jacobians of hyperelliptic curves of genus 2.
More precisely let $\mathscr A$ be an absolutely simple, ordinary principally polarizable abelian surface. Let $K$ be its quartic CM-field, and let $K_0$ be its real quadratic subfield, and let $\cO$ be the order in $K$ isomorphic to $\End(\mathscr A)$. Let $\mathscr P(\cO) \unlhd \Cl(\cO)$ be the image of the natural projection of the Shimura class group $\mathfrak C(\cO)$ on the ideal class group $\Cl(\cO)$. As explained in Section~\ref{par:polarizations}, the orbit $\mathscr P(\mathscr A)$ of the CM-action of $\mathscr P(\cO)$ on $\mathscr A$
is a set of $\F_q$-isomorphism classes of principally polarizable abelian surfaces isogenous to $\mathscr A$ and with same endomorphism ring $\cO$. This orbit contains \emph{all} such isomorphism classes when the CM-action is transitive, for instance when $\cO$ has maximal real multiplication (i.e., $\cO_{K_0} \subset \cO$).
Let $d_K$ be the discriminant of $K$. Applying Theorem~\ref{thm:rapidmixgeneral} allows to construct isogeny graphs on $\mathscr P(\mathscr A)$ that are expanders, and where all the isogenies are cyclic, with prime degrees bounded by $O\left(([\Cl(\cO) : \mathscr P(\cO)] \ln d_K)^{2 + \epsilon}\right)$. When $K$ is a primitive CM-field, the index $[\Cl(\cO) : \mathscr P(\cO)]$ is the narrow class number of $\cO \cap K_0$.

This result is used for two major applications, concerning the discrete logarithm problem in genus 2 and the computation of explicit isogenies between two isogenous principally polarized ordinary abelian surfaces. Aside from this, we remove certain heuristics from the complexity analysis of Galbraith's algorithm for elliptic curves. 

\subsubsection{Random self-reducibility of the discrete logarithm problem in genus 2.}
We use the rapid mixing properties of isogeny graphs to prove that the discrete logarithm problem in genus 2 is random self-reducible in isogeny subclasses of ordinary Jacobians of genus 2 curves over a finite field, thus extending the similar result for elliptic curves proved in \cite[Th. 1.6]{JMV09}.

\begin{thm}[Random self-reducibility in genus 2]\label{thm:randselfred}
Let $K$ be a primitive quartic CM-field, $K_0$ its maximal real subfield, and $\cO$ an order in $K$.
Let $\mathscr J$ be a Jacobian defined over $\F_q$ of endomorphism ring isomorphic to $\cO$.
Let $\mathcal V$ be the set of all $\F_q$-isomorphism classes of Jacobians defined over $\F_q$, isomorphic to $\mathscr J$ and with endomorphism ring isomorphic to $\cO$.
Let $G$ be a subgroup of $\mathscr J(\F_q)$ of order $Q$.
Suppose that
\begin{enumerate}
\item there is a polynomial time (in $\log q$) algorithm $\mathcal A$ that solves the DLP for a positive proportion $\mu > 0$ of the Jacobians in $\mathcal V$,
\item $\cO \cap K_0$ is the ring of integers of $K_0$, and $[\cO:\mathbf Z[\pi, \bar\pi]]$ is coprime to~$2Q$.
\end{enumerate}
Then, assuming the Generalized Riemann Hypothesis, there is an absolute polynomial $P$ in three variables such that the DLP can be solved on $G$ by a probabilistic algorithm of expected runtime 
$P(\log q, h_{\cO_{0}}^+, \mathrm{Disc}(K_0))/\mu$ , where $h_{\cO_{0}}^+$ is the narrow class number of the order $\cO_0 = \cO \cap K_0$. 
\end{thm}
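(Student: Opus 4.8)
The plan is to use Theorem~\ref{thm:rapidmixgeneral} to make the orbit $\mathscr P(\mathscr A)$ (with $\mathscr A=\mathscr J$) into a rapidly mixing isogeny graph, transport a given discrete logarithm instance on $G$ along a random walk to a uniformly random Jacobian of $\mathcal V$, solve it there with $\mathcal A$, and transport the answer back. From $\mathscr J$ one recovers, by point counting, the Frobenius $\pi$, the CM-field $K=\Q[\pi]$, its real quadratic subfield $K_0$, and $\cO_0=\cO\cap K_0=\cO_{K_0}$ (maximal, by hypothesis~(2)); the order $\cO=\End(\mathscr J)$ is known, and a presentation of $\Cl^+(\cO_0)$ is computed within the stated budget. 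By the polarized CM theory of Section~\ref{par:polarizations}, $\mathcal V$ is a torsor under the Shimura class group $\mathfrak C(\cO)$, the map forgetting the polarization is a $\mathfrak C(\cO)$-equivariant surjection $\mathcal V\to\mathscr P(\mathscr A)$, and its fibres --- the $\cong$-classes of principal polarizations on a given surface --- are torsors under $\ker(\mathfrak C(\cO)\to\mathscr P(\cO))$, hence all of the same bounded nonzero size; moreover $[\Cl(\cO):\mathscr P(\cO)]=h_{\cO_0}^+$ and $\mathscr P(\cO)=\ker\bigl(\Cl(\cO)\xrightarrow{N_{K/K_0}}\Cl^+(\cO_0)\bigr)$ because $K$ is primitive.

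I would then apply Theorem~\ref{thm:rapidmixgeneral} with $g=2$, $H=\mathscr P(\cO)$, and $\mathfrak m=(2Q\,[\cO:\Z[\pi,\bar\pi]])$, obtaining a bound $B=O\bigl((h_{\cO_0}^+\log q)^{2+\eps}\bigr)$ --- using that $\ln d_K$, $\ln N\mathfrak f$, $\ln N\mathfrak m$ are $O(\log q)$, since all are bounded by powers of $q$ --- such that a random walk of length $\lceil\ln(2|\mathscr P(\cO)|)\rceil$ in the isogeny graph $\mathscr G_B$ on $\mathscr P(\mathscr A)$ ends at any prescribed vertex with probability in $\bigl[\tfrac{1}{2|H|},\tfrac{3}{2|H|}\bigr]$. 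Walking on $\mathscr G_B$ effectively needs two routines: (i) sampling a uniform edge, i.e.\ a uniform prime ideal $\mathfrak l$ of $\cO$ (equivalently of $\cO_K$, being coprime to $\mathfrak f$) of prime norm $\ell\le B$, coprime to $\mathfrak f\mathfrak m$, with $[\mathfrak l]\in\mathscr P(\cO)$ --- the last condition tested by checking that $\mathfrak l\overline{\mathfrak l}$ is trivial in $\Cl^+(\cO_0)$, of cost $\mathrm{poly}(h_{\cO_0}^+,\mathrm{Disc}(K_0),\log q)$, run over the $O(B)$ candidate primes; and (ii) evaluating on prescribed points the $\F_q$-rational $\ell$-isogeny attached to $\mathfrak l$, for which I would invoke the Dudeanu--Jetchev--Robert algorithm, applicable because $\ell$ is coprime to $2Q\,[\cO:\Z[\pi,\bar\pi]]$ by hypothesis~(2) and the choice of $\mathfrak m$, of cost $\mathrm{poly}(B,\log q)$ per step.

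The reduction itself: given $R=rP$ with $P,R\in G$ and $R\in\langle P\rangle$, repeat $O(1/\mu)$ times an independent random walk of the above length from $\mathscr J$, carrying $P$ and $R$ along using~(ii); as every step has degree coprime to $Q$, the composite isogeny $\Phi$ is injective on $\langle P\rangle$, so the transported points satisfy $\Phi(R)=r\,\Phi(P)$ with the same $r$. By the mixing estimate and the equal-fibre structure, sampling a uniform principal polarization on the endpoint (Section~\ref{par:polarizations}) yields a Jacobian $\mathscr J'\in\mathcal V$ whose distribution is within a constant factor of uniform, hence good (hypothesis~(1)) with probability $\ge\mu/2$. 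Run $\mathcal A$ on $(\mathscr J',\Phi(P),\Phi(R))$; if it returns $r'$ with $r'\,\Phi(P)=\Phi(R)$, output $r'$, which by injectivity of $\Phi$ on $\langle P\rangle$ satisfies $r'P=R$. (If $\mathcal A$ only handles a positive fraction of the instances on a given Jacobian, re-randomise the instance inside $G$ before each query.) Some round succeeds with constant probability, and amplification gives expected total cost $O(1/\mu)\cdot\mathrm{poly}(\log q,B,h_{\cO_0}^+,\mathrm{Disc}(K_0))=P(\log q,h_{\cO_0}^+,\mathrm{Disc}(K_0))/\mu$ for an absolute polynomial $P$.

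The hard part is the passage between the \emph{unpolarized} setting, where the expander lives --- Theorem~\ref{thm:rapidmixgeneral} produces a graph on abelian surfaces, because a generic $\mathfrak l$-isogeny does not carry a principal polarization to one --- and the set $\mathcal V$ of Jacobians on which the problem is posed: one must establish that $\mathscr P(\mathscr A)$ receives \emph{all} the relevant Jacobians, that every vertex is principally polarizable, and that its principal polarizations can be enumerated and sampled uniformly within budget. This is precisely the polarized CM-theoretic input for which the maximal-real-multiplication and primitivity hypotheses are imposed; once it is in place, the remaining steps --- bounding $d_K,N\mathfrak f,N\mathfrak m$ by powers of $q$, propagating the constant-factor errors of Theorem~\ref{thm:rapidmixgeneral} through the fibre sampling, and the coprimality bookkeeping with $Q$ and the conductors --- are routine.
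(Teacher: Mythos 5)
Your proposal is correct and follows essentially the same route as the paper: identify $\mathcal V$ modulo forgetting the polarization with the orbit $\mathscr P(\mathscr A)$ of $H=\mathscr P(\cO)$, invoke Theorem~\ref{thm:rapidmixgeneral} to get a rapidly mixing graph with prime degrees $O\left(\left(h_{\cO_0}^+\log q\right)^{2+\eps}\right)$, walk while carrying $P$ and $R$ via the DJR algorithm, and handle the (bounded, equal-size) polarization fibres separately. The only deviation is that you fold $Q$ into the modulus $\mathfrak m$ to guarantee injectivity on $\langle P\rangle$, a point the paper's proof leaves implicit; this does not change the bound since $\ln N\mathfrak m$ remains $O(\log q)$.
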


\begin{rem}
In most practical applications, since the CM method is currently the only viable method to generate cryptographic parameters, both the narrow class number 
$h_{\cO_{0}}^+$ and the discriminant $\mathrm{Disc}(K_0)$ are small (constant or at most polynomial in $\log q$), and the above algorithm yields a polynomial (in $\log q$) reduction and 
thus, justifies the common cryptographic belief that the security of these curves is governed only by the characteristic polynomial of Frobenius. 
\end{rem}

\begin{rem}
The conditions that $\cO \cap K_0$ is the ring of integers of $K_0$, and $[\cO:\mathbf Z[\pi, \bar\pi]]$ is coprime to $2Q$ appear because they are required by the only currently known algorithm~\cite{dudeanu-jetchev-robert, Dudeanu16} to compute cyclic isogenies in genus 2.
\end{rem}

\subsubsection{Explicit isogenies in genus 2.}
In \cite{Gal99}, Galbraith considers the problem of computing an explicit isogeny between two isogenous ordinary elliptic curves 
$E_1$ and $E_2$ over $\F_q$. 
His approach is based on considering isogeny graphs and growing trees rooted at both $E_1$ and $E_2$ of small-degree computable isogenies until a collision is found. Galbraith's original algorithm is proven to finish in probabilistic polynomial time (in $\log q$), finding a path of length $O(\ln h_K)$ from $E_1$ to $E_2$, under GRH and a heuristic assumption claiming that the distribution of the new random points found in the process of growing the trees is close to uniform. 
In Section~\ref{sec:galbraith}, we use the expander properties of isogeny graphs to construct and analyze an algorithm similar to the one from \cite{Gal99}. This new algorithm improves upon Galbraith's in two ways. Firstly, its analysis relies only on GRH, without any additional heuristics. Secondly, it works in a generalized framework which, in particular, encompasses the case of elliptic curves, and of Jacobians of genus 2 hyperelliptic curves.

\subsection{Organization of the paper}\label{subsec:organization}
Section~\ref{sec:isographs} contains the necessary background on abelian varieties with complex multiplication, polarizations, and canonical lifting, and uses this theory to build the bridge between isogeny graphs and some Cayley graphs.
In Section~\ref{secExpanderCayley} we prove Theorem~\ref{thm:eigenCharacGeneral} and use it to prove Theorem~\ref{thm:rapidmixgeneral}.
In Section~\ref{sec:isoggraphsjac}, we discuss the consequences of these results on isogeny graphs of principally polarized abelian surfaces over finite fields and deduce Theorem~\ref{thm:randselfred}, the random self-reducibility.
Finally, we present the generalization of Galbraith's algorithm as well as the new complexity analysis in Section~\ref{sec:galbraith}.

\section{Isogeny graphs of ordinary abelian varieties}\label{sec:isographs}

In this section, we describe the relation between our graphs of interest -- graphs of horizontal isogenies between ordinary abelian varieties over finite fields -- and class groups of certain number fields, or subgroups thereof.

\subsection{Isogeny graphs over finite fields}

Let $\mathscr A$ be an absolutely simple, ordinary abelian variety of dimension $g$ over a finite field $\F_q$. Its endomorphism algebra $K = \End(\mathscr A) \otimes \Q$ is a CM-field, that is a totally imaginary quadratic extension of a totally real number field $K_0$. The field $K_0$ is of degree $g$ over $\Q$. The \emph{Frobenius polynomial} is the characteristic polynomial of the Frobenius endomorphism $\pi$ acting 
on the $\ell$-adic Tate module $T_\ell \mathscr A$ for $\ell$ different from the characteristic of $\F_q$. This endomorphism generates the field $K = \Q(\pi)$, and a theorem due to Tate~\cite{tate:isogeny} states that two abelian varieties defined over $\F_q$ are isogenous if and only if they have the same Frobenius polynomial. This element $\pi$ seen in $\overline{\Q}$ is a $q$-Weil number, and it uniquely determines the isogeny class of simple abelian varieties over $\F_q$ with Frobenius $\pi$~\cite[Lemma IV.2.2]{Streng10}.
The endomorphism ring of $\mathscr A$ is an order $\cO = \End(\mathscr A)$ in the CM-field $K$. We are interested in \emph{horizontal} isogeny graphs, i.e., graphs whose vertices are abelian varieties with the same endomorphism ring $\cO$ and whose edges are labelled by certain isogenies between these varieties.

The abelian varieties arising in cryptography are constructed as Jacobians of some hyperelliptic curves (usually of genus 1 or 2), and are therefore principally polarized. The case of elliptic curves is well understood and the literature on their isogeny graphs is already extensive. The present work aims at generalizing some of that literature, dealing with horizontal isogeny graphs, to other families of abelian varieties. We put a particular focus on principally polarized abelian surfaces, where these new results combined with the algorithm of~\cite{dudeanu-jetchev-robert,Dudeanu16} give rise to some interesting applications, yet the framework we develop is much more general.

\subsection{Class groups of orders}\label{subsec:ringclass} 
Class groups of orders in number fields are closely related to horizontal isogeny graphs, via the theory of complex multiplication, as will be recalled in Section~\ref{subsec:CMTheory}. In this subsection, we fix the notations and recall some useful results on class groups.

Let $K$ be a number field. Then, $\mathscr I(K)$ denotes the group of fractional ideals of $\cO_K$. Fix a modulus $\mathfrak m$, that is a formal product of primes in $K$, finite or infinite. The finite part is an ideal $\mathfrak m_0$ in $\cO_K$, and the infinite part is a subset $\mathfrak m_\infty$ of the real embeddings of $K$. Let $\mathscr I_\mathfrak m (K)$ be the subgroup generated by ideals coprime to $\mathfrak m_0$.
Let $P_{K,1}^\mathfrak m$  be the subgroup of $\mathscr I_\mathfrak m(K)$ generated by principal ideals of the form $\alpha \cO_K$ where $\ord_\mathfrak p(\alpha - 1) \geq \ord_\mathfrak p(\mathfrak m_0)$ for all primes $\mathfrak p$ dividing $\mathfrak m_0$, and $\imath(\alpha) > 0$ for all $\imath \in \mathfrak m_\infty$.
The \emph{ray class group} of $K$ modulo $\mathfrak m$ is the quotient group $\Cl_\mathfrak m(K) = \mathscr I_\mathfrak m(K)/ P_{K,1}^\mathfrak m$.
The \emph{narrow ray class group} modulo the ideal $\mathfrak m_0$ is $\Cl_\mathfrak m(K)$ when $\mathfrak m_\infty$ contains all the real embeddings.
\begin{example}
The subgroup $P_{K,1}^{\cO_K}$ is generated by all the principal ideals, so $\Cl_{\cO_K}(K)$ is the usual ideal class group $\Cl(K)$. Also, the narrow ray class group modulo $\cO_K$ is exactly the narrow class group $\Cl^+(K)$.
\end{example}

Let $\cO$ be an order in $K$. The \emph{conductor} of $\cO$, defined as ${\mathfrak f = \{x \in K \mid x\cO_K \subset \cO\}}$, is an invariant of the order. It is the largest subset of $K$ that is simultaneously an ideal in $\cO$ and in the maximal order $\cO_K$. An ideal in $\cO$ is invertible if and only if it is coprime to the conductor $\mathfrak f$.
Let $\mathscr I(\cO)$ denote the group of invertible ideals of $\cO$, and $P(\cO)$ the subgroup generated by principal ideals. The class group of $\cO$ is the quotient $\Cl(\cO) = \mathscr I(\cO) / P(\cO)$.
It can also be expressed as a quotient of $\mathscr I_\mathfrak f(K)$, as follows. Let $P_{K,\cO}^\mathfrak f$ be the subgroup of $\mathscr I_\mathfrak f(K)$ generated by principal ideals $\alpha \cO_K$ where $\alpha \in \cO$ and $\alpha\cO + \mathfrak f = \cO$. From~\cite[Th. 3.8]{Lv2015} and~\cite[Th. 3.11]{Lv2015}, the map sending any integral ideal $\mathfrak a$ of $\cO_K$ to the ideal $\mathfrak a \cap \cO$ of $\cO$ extends to a surjection $\mathscr I_\mathfrak f (K) \rightarrow \Cl(\cO)$ with kernel $P_{K,\cO}^\mathfrak f$.
Therefore, it induces an isomorphism $$ \Cl(\cO) \cong \mathscr I_\mathfrak f (K)/ P_{K,\cO}^\mathfrak f.$$
From~\cite[Th. 4.2]{Lv2015}, there is a unique abelian extension $H(\cO)$ of $K$, the \emph{ring class field} of $\cO$, such that all primes of $K$ ramified in $H(\cO)$ divide $\mathfrak f$, and the kernel of the Artin map $$\varphi_{H(\cO)/K}^{\mathfrak f}: \mathscr I_\mathfrak f (K) \rightarrow \Gal(H(\cO)/K)$$ is $P_{K,\cO}^\mathfrak f$. This map then induces an isomorphism $\Cl(\cO) \cong \Gal(H(\cO)/K)$. Similarly, there is a unique abelian extension $H^+(\cO)$, the \emph{narrow ring class field} of $\cO$, ramified only at primes dividing $\mathfrak f$ and at infinite primes, such that $\Gal(H^+(\cO)/K)$ is isomorphic to the narrow class group $\Cl^+(\cO)$, through the Artin map.

\subsection{Abelian varieties over $\C$ with CM}\label{subsec:CMTheory}
A key tool for studying isogeny graphs is the theory of complex multiplication (henceforth, CM theory) The main reference for this section is~\cite{taniyama-shimura}.
Let $\mathscr{A}_{\C} = \C^g / \Lambda$ be an abelian variety of dimension $g$ over $\C$, where $\Lambda$ is a lattice, that has complex multiplication by a CM-field $K$ and let $K_0$ be the real subfield of $K$ of degree $g$.

\subsubsection{CM-types.} The field $K$ has $2g$ embeddings in $\C$ which we denote $\vphi_1, \dots, \vphi_{2g}$. An endomorphism of $\mathscr{A}_{\C}$ yields an endomorphism of $\C^g$ and of $\Lambda$. We get an analytic representation $\rho_a \colon \End(\mathscr{A}_{\C}) \ra \End_{\C}(\C^g)$ and a rational  representation $\rho_{r} \colon \End(\mathscr{A}_{\C}) \ra \End_{\Z}(\Lambda)$. We have  
$\rho_r \otimes \C \sim \rho_a \oplus \overline{\rho}_a$ and at the same time, $\rho_r \otimes \C \sim \vphi_1 \oplus \dots \oplus \vphi_{2g}$. It follows that, up to some reindexing,  $\rho_a = \vphi_1 \oplus \dots \oplus \vphi_g$ where $\vphi_1,\dots, \vphi_g$ are not pairwise conjugate. We call $(K; \{\vphi_1,\dots, \vphi_g\})$ the CM-type of $\mathscr{A}_{\C}$.
The abelian variety $\mathscr A_\C$ is simple if and only if its CM-type is \emph{primitive}, which means that $(K; \Phi)$ is not a lift of a CM-type on a CM-subfield
of $K$~\cite[\S 8.2]{taniyama-shimura}. 

\begin{rem}
If $g = 2$, the abelian surface $\mathscr A_\C$ is simple if and only if the field $K$ is a primitive CM-field, i.e., $K$ does not have any proper CM-subfield. This follows from~\cite[Lemma I.3.4]{Streng10}.
\end{rem}

Fix 
a CM-type $\Phi = \{\varphi_1,\dots, \varphi_g\}$ for $K$. 
Any abelian variety over $\C$ of CM-type $(K; \Phi)$ is isomorphic to $\C^2 / \Phi(\mathfrak m)$ for some full-rank lattice $\mathfrak m$ in $K$, where $\Phi \colon K \ra \C^g$ is given by $x \mapsto (\vphi_1(x), \dots, \vphi_g(x))$. Let $\cO$ be the order of $K$ isomorphic to the endomorphism ring of the variety. Then, the lattice $\mathfrak m$ is an $\cO$-submodule of $K$, and $\cO$ coincides with the order $\cO(\mathfrak m)$ associated to the lattice,
$$\cO(\mathfrak m) = \{\alpha \in K \mid \alpha \mathfrak m \subset \mathfrak m\}.$$
Given an ideal $\mathfrak a$ in $\cO$, the variety $\C^2 / \Phi(\mathfrak a^{-1}\mathfrak m)$ is isogenous to $\C^2 / \Phi(\mathfrak m)$, and its endomorphism ring is also $\cO$. This isogenous variety is actually isomorphic if and only if $\mathfrak a$ is principal. In fact, this construction induces a free action of the ideal class group $\Cl(\cO)$ on the set of isomorphism classes of abelian varieties of CM-type $(K; \Phi)$ with endomorphism ring $\cO$.

\subsubsection{Polarizations and the Shimura class group.}\label{par:polarizations} A polarization on an abelian variety $X$ over a field $k$ is an ample line bundle $\cL_X$ on $X$. 
Associated to such $\cL_X$ is the polarization isogeny $\vphi_{\cL_X} \colon X \ra X^\vee$, where $ X^\vee$ is the dual of $X$. A principal polarization is an ample 
line bundle of degree one (equivalently, the polarization isogeny is an isomorphism).
\begin{example}
If $\mathscr A_\C$ is a simple abelian surface, $\mathscr A_\C$ is principally polarizable if and only if it is the Jacobian of a genus 2 curve (see~\cite[Prop. 3.13]{milne:cm} and~\cite[Th.~4.1]{MN02}).
\end{example}
In the remainder of this paragraph, we shall restrict to simple abelian varieties, or equivalently, to primitive CM-types $(K; \Phi)$.
If $X = \mathscr{A}_{\C}$, a simple complex 
abelian variety with CM by an order $\cO$ in $K$, the theory of Taniyama and Shimura \cite[\S 14]{taniyama-shimura} which we now briefly recall provides 
an explicit description of the polarizations on $X$ in terms of the arithmetic of $K$. Indeed, by the theory of 
complex multiplication, there exists a full-rank lattice $\mathfrak m$ in $K$ such that 
$X(\C) \isom \C^2 / \Phi(\mathfrak m)$. The dual abelian variety of $\C^g / \Phi(\mathfrak m)$ is $\C^g / \Phi(\mathfrak m^*)$ where
$\mathfrak m^* = \{ \beta \in K \colon \Tr_{K/\Q}(\beta \overline{\mathfrak m})
\in \Z \}$.  
A polarization $\cL$ on $\C^g / \Phi(\mathfrak m)$ induces
an isogeny $\vphi_{\cL} \colon \C^g / \Phi(\mathfrak m) \ra
\C^g / \Phi(\mathfrak m^*)$ that is given by $x \mapsto \rho_a(\xi) x$
for some purely imaginary element $\xi \in K$ that satisfies $\Phi(\xi) \in (i\R_{>0})^2$.
The polarization is also described by the Riemann form
$E(x,y)=\Tr_{K/\Q}(\xi \overline{x} y)$.
The polarization is principal if and only if
$\vphi_{\cL}(\Phi(\mathfrak m)) = \Phi(\mathfrak m^*)$, i.e., if and only if 
$\xi \mathfrak m = \mathfrak m^*$.
Thus, the CM-type $(K; \Phi)$ being fixed, the principally polarized abelian variety $(\mathscr A_\C, \mathcal L)$ is determined by the pair $(\mathfrak m, \xi)$.
The \emph{Shimura class group} of $\cO$, acts on such pairs. 
It is defined as
$$
\mathfrak C(\cO) = \{(\mathfrak a,\alpha) \mid \mathfrak a \in \mathscr I(\cO) \text{ and } \mathfrak a\overline{\mathfrak a} = \alpha\cO, \alpha \in K_0\text{ totally positive}\}/\sim
$$
with componentwise multiplication, where two pairs $(\mathfrak a,\alpha)$ and $(\mathfrak b,\beta)$ are equivalent for the relation~$\sim$ if there exists an element $u \in K^\times$ such that $\mathfrak b = u\mathfrak a$ and $\beta = u\overline u\alpha$.
For any $(\mathfrak a,\alpha) \in \mathfrak C(\cO)$ (up to equivalence), the pair $(\mathfrak a^{-1}\mathfrak m, \alpha\xi)$ corresponds to a principally polarized abelian variety isogenous to $\mathscr A_\C$ and with same endomorphism ring $\cO$ (up to isomorphism).
This action of $\mathfrak C(\cO)$ is in fact free on the set of isomorphism classes of principally polarized abelian varieties isogenous to $\mathscr A_\C$ with same endomorphism ring~\cite[\S 17]{taniyama-shimura}.
The structure of $\mathfrak C(\cO)$ and its relation to $\Cl(\cO)$ is described by the exact sequence
$$1 \longrightarrow (\cO_0^\times)^+/N_{K/K_0}(\cO^\times) \xrightarrow{u\mapsto (\cO,u)} \mathfrak C(\cO) \xrightarrow{(\mathfrak a,\alpha)\mapsto \mathfrak a} \Cl(\cO) \xrightarrow{N_{K/K_0}} \Cl^+(\cO_0),$$
where $\cO_0 = \cO \cap K_0$, $(\cO_0^\times)^+$ is its subgroup of totally positive units, and $\Cl^+(\cO_0)$ its narrow class group.
\comment{Since the CM-field $K$ is primitive, the structure of $\mathfrak C(K)$ and its relation to $\Cl_K$ is described by the exact sequence \cite[Theorem 3.1]{gruenewald-broeker-lauter}
$$1 \longrightarrow (\cO_{K_0}^\times)^+/N_{K/K_0}(\cO_{K}^\times) \xrightarrow{u\mapsto (\cO_K,u)} \mathfrak C(K) \xrightarrow{(\mathfrak a,\alpha)\mapsto \mathfrak a} \Cl_K \xrightarrow{N_{K/K_0}} \Cl_{K_0}^+ \longrightarrow 1.$$
\bennote{Check that it indeed generalizes to arbitrary degree primitive CM-fields.}
}The image of the projection $\mathfrak C(\cO) \rightarrow \Cl(\cO)$, denoted $\mathscr P(\cO)$, is a subgroup of $\Cl(\cO)$ that acts freely on the set of principally \emph{polarizable} abelian varieties isogenous to $\mathscr A_\C$ with endomorphism ring $\cO$. 
Notice the crucial distinction between polarized and polarizable. The amount of information lost with the polarization is encoded in the group $(\cO_0^\times)^+/N_{K/K_0}(\cO^\times)$. For a maximal orders in quartic CM-fields, this group is either trivial, in which case $\mathfrak C(\cO)$ and $\mathscr P(\cO)$ are isomorphic and no information is lost, or it is of order two, in which case the abelian surfaces encoded in $\mathscr P(\cO)$ each have two possible polarizations. From the exactness of the sequence, the subgroup $\mathscr P(\cO)$ is also the kernel of $N_{K/K_0}$.
The following lemma allows to extend the result of~\cite[Th. 3.1]{gruenewald-broeker-lauter} to higher dimensions, and non-maximal orders.
\begin{lem}
Let $K$ be a CM-field and $K_0$ its maximal real subfield. Let $\cO$ be an order in $K$ of conductor $\mathfrak f$, and $\cO_0 = \cO \cap K_0$.
The index of the image of $\Cl(\cO)$ through the norm map $N_{K/K_0} : \Cl(\cO) \rightarrow \Cl^+(\cO_0)$ is of index at most 2 in $\Cl^+(\cO_0)$.
If there is a prime in $K_0$ that ramifies in $K$ and does not divide $\mathfrak f$, the norm map $N_{K/K_0}$ is surjective.
\end{lem}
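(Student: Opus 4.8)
The plan is to recognize the image of $N_{K/K_0}\colon \Cl(\cO) \to \Cl^+(\cO_0)$ via class field theory, identifying it with a norm subgroup of a Galois group. Recall from Subsection~\ref{subsec:ringclass} that $\Cl(\cO) \cong \Gal(H(\cO)/K)$ and $\Cl^+(\cO_0) \cong \Gal(H^+(\cO_0)/K_0)$, where $H(\cO)$ is the ring class field of $\cO$ and $H^+(\cO_0)$ the narrow ring class field of $\cO_0$. The key observation is that the norm map $N_{K/K_0}$ on ideals corresponds, under the Artin isomorphism, to the restriction map on Galois groups: if $\mathfrak p$ is a prime of $K$ above a prime $\mathfrak q$ of $K_0$ unramified in $H^+(\cO_0) \cdot K$, then $\Frob_{\mathfrak p}$ restricted to $H^+(\cO_0)$ equals $\Frob_{\mathfrak q}^{f}$ where $f = f(\mathfrak p/\mathfrak q)$, and $N_{K/K_0}(\mathfrak p) = \mathfrak q^{f}$. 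Thus the image of $N_{K/K_0}$ in $\Cl^+(\cO_0) \cong \Gal(H^+(\cO_0)/K_0)$ is the subgroup generated by all $\Frob_{\mathfrak q}^{f(\mathfrak p/\mathfrak q)}$ over primes $\mathfrak q$ of $K_0$, which is the image of $\Gal(H^+(\cO_0)\cdot K / K) \to \Gal(H^+(\cO_0)/K_0)$ composed with restriction — equivalently, it is $\Gal(H^+(\cO_0)/L)$ where $L = H^+(\cO_0) \cap K^{?}$... more precisely, it is the norm subgroup corresponding to the compositum $K \cdot H^+(\cO_0)$ over $K_0$.

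More carefully, I would argue as follows. Let $M = H^+(\cO_0)$, an abelian extension of $K_0$ of group $\Cl^+(\cO_0)$, ramified only at primes dividing $\mathfrak f$ and at infinity. The compositum $KM/K_0$ is abelian (as $K/K_0$ is), and by the standard compatibility of the Artin map with norms, the image of $N_{K/K_0}\colon \Cl(\cO) \to \Cl^+(\cO_0)$ corresponds under $\Cl^+(\cO_0) \cong \Gal(M/K_0)$ exactly to the subgroup $\Gal(M/M\cap K)$... but $M \cap K$ could be $K_0$ or an intermediate field. The point is that $\Gal(M/K_0)/N_{K/K_0}(\Cl(\cO)) \cong \Gal((M\cap K)/K_0)$, and $M \cap K$ is a subextension of $K/K_0$, which has degree $2$; hence $M \cap K$ is either $K_0$ (giving surjectivity) or all of $K$ if $K \subseteq M$, in which case the cokernel has order $2$. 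This establishes the index-at-most-$2$ claim. For the second assertion, suppose a prime $\mathfrak q$ of $K_0$ ramifies in $K$ but does not divide $\mathfrak f$. Then $\mathfrak q$ is unramified in $M = H^+(\cO_0)$ (which is ramified only at primes dividing $\mathfrak f$ and at infinity, and $\mathfrak q$ is finite). If we had $K \subseteq M$, then $\mathfrak q$ would ramify in $M$, a contradiction; hence $K \not\subseteq M$, so $M \cap K = K_0$, and $N_{K/K_0}$ is surjective.

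The main obstacle I anticipate is making the translation between the ideal-theoretic norm map on ring class groups and the Galois-theoretic restriction map fully rigorous in the non-maximal setting, i.e.\ checking that the Artin map $\varphi^{\mathfrak f}_{H(\cO)/K}$ and $\varphi^{\mathfrak f}_{H^+(\cO_0)/K_0}$ fit into a commutative square with $N_{K/K_0}$ on ideals and restriction $\Gal(H(\cO)/K)\cdot(\text{something}) \to \Gal(H^+(\cO_0)/K_0)$ on the Galois side. This requires knowing that $H^+(\cO_0)\cdot K \subseteq H(\cO)$ (so that restriction makes sense), which should follow from comparing conductors — $H^+(\cO_0)$ is ramified only at primes over $\mathfrak f$ of $K_0$ and infinity, so $H^+(\cO_0)\cdot K$ is ramified over $K$ only at primes dividing $\mathfrak f$ (the conductor of $\cO$, which divides the extension of the conductor of $\cO_0$ by the definition $\mathfrak f \cap K_0$ relationship) and possibly at infinite primes; one must check $H(\cO)$ absorbs the infinite part, or instead work with $H^+(\cO)$ throughout and descend. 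A clean alternative is to avoid class field theory entirely and argue directly: realize $\Cl^+(\cO_0)/N_{K/K_0}\Cl(\cO)$ as a quotient of $\Cl^+(\cO_0)$ by the subgroup generated by norms of prime ideals, use \v{C}ebotarev (or just the surjectivity of Artin on primes) to see this quotient is generated by Frobenius classes of primes $\mathfrak q$ of $K_0$, each contributing $\Frob_{\mathfrak q}$ (if $\mathfrak q$ splits or ramifies in $K$, since then some prime above it has $f=1$) — so the only obstruction comes from inert primes, which contribute $\Frob_{\mathfrak q}^2$; by \v{C}ebotarev the inert primes are exactly those whose Frobenius in $\Gal(KM/K_0)$ generates the nontrivial coset of $\Gal(KM/M)$, forcing the cokernel to inject into $\Z/2$. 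I would present the class-field-theoretic version as the main line and this as a remark, and I expect the conductor bookkeeping between $\cO$ and $\cO_0$ to be the one genuinely fiddly point.
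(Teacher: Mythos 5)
Your proposal follows essentially the same route as the paper's proof: both identify $\Cl(\cO)$ and $\Cl^+(\cO_0)$ with Galois groups via the Artin map, observe that $N_{K/K_0}$ corresponds to restriction to the compositum $K\cdot H^+(\cO_0)\subseteq H(\cO)$, identify the cokernel with $\Gal\bigl((K\cap H^+(\cO_0))/K_0\bigr)$, which has order at most $2$ since $[K:K_0]=2$, and derive surjectivity from the fact that a prime of $K_0$ ramifying in $K$ but not dividing $\mathfrak f$ is unramified in $H^+(\cO_0)$, forcing $K\cap H^+(\cO_0)=K_0$. The containment $K\cdot H^+(\cO_0)\subseteq H(\cO)$ that you flag as the fiddly point is likewise asserted without further detail in the paper.
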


\begin{proof}
We use the elements of class field theory recalled in Section~\ref{subsec:ringclass}.
Let $H = H(\cO)$ and $H^+ = H^+(\cO_0)$. The compositum $KH^+$ is a subfield of $H$, so we have a natural surjection $\Gal(H/K) \rightarrow \Gal(KH^+/K)$. From Galois theory, $\Gal(KH^+/K)$ is isomorphic to $\Gal(H^+/(K\cap H^+))$, which in turn is isomorphic to the quotient $\Gal(H^+/K_0)/\Gal((K\cap H^+)/K_0)$. Let $N = \Gal((K\cap H^+)/K_0)$. Then,
$$\psi : \Gal(H/K) \longrightarrow \Gal(H^+/K_0)/N : \sigma \longmapsto \sigma\mid_{H^+} \mod N,$$
is the composition of these canonical maps, and is therefore is a surjection. Through the Artin map, the norm $N_{K/K_0}$ commutes with $\psi$. We conclude that the image of $\Cl(\cO)$ through $N_{K/K_0}$ is a subgroup of $\Cl^+(\cO_0)$ of index at most $|N| \leq 2$. If there is a prime in $K_0$ that ramifies in $K$ and does not divide $\mathfrak f$, then $K\cap H^+ = K_0$, so $|N| = 1$ and the map $N_{K/K_0}$ is surjective.
\end{proof}

In particular, this lemma implies that the index $[\Cl(\cO) : \mathscr P(\cO)]$ is either the narrow class number $h_{\cO_0}^+ = |\Cl^+(\cO_0)|$, or $h_{\cO_0}^+/2$. It is exactly $h_{\cO_0}^+$ whenever there is a prime in the field $K_0$ that ramifies in $K$ and does not divide $\mathfrak f$. As observed in~\cite[Th. 3.1]{gruenewald-broeker-lauter}, there exists such a prime when $\cO$ is the maximal order in a primitive quartic CM-field.

\subsection{Canonical lifting}\label{subsec:canlift}
Recall that our objects of primary interest are varieties defined over a finite field $\F_q$.
The theory of canonical lifting of Serre and Tate \cite{serre-tate} allows us to lift an ordinary abelian variety $\mathscr A/\F_q$ to an abelian variety $\tilde{\mathscr A}$ over $W(\mathbf F_q)$, the ring of Witt vectors of $\F_q$ in such a way that all endomorphisms of $\mathscr A$ lift to endomorphisms of $\tilde{\mathscr A}$, and $\mathscr A \mapsto \tilde{\mathscr A}$ is functorial. To obtain lifts from abelian varieties over $\F_q$ to abelian varieties over 
$\C$, we fix an embedding $\imath \colon W(\Fbar_q) \hookrightarrow \mathbf C$ and let $\mathscr{A}_{\mathbf C}$ be the complex abelian variety $\tilde{\mathscr A}\otimes_{\imath}\mathbf C$. If $T(\mathscr A) = H_1(\mathscr{A}_{\C}, \Z)$ then $T(\mathscr A)$ is a free $\Z$-module of rank $2\cdot \dim(\mathscr A)$. The correspondence $\mathscr A \mapsto T(\mathscr A)$ is functorial and any isogeny $\varphi \colon \mathscr A \rightarrow \mathscr B$ over $\Fbar_q$ gives rise to a short exact sequence
$$
0 \longrightarrow T(\mathscr A) \xrightarrow{T(\varphi)} T(\mathscr B) \longrightarrow \ker(\varphi) \longrightarrow 0.
$$
A theorem of Deligne \cite[Th.7]{Del69} says that if $\pi$ is the Frobenius endomorphism of $\mathscr A$ over $\mathbf F_q$ then the functor $\mathscr A \mapsto (T(\mathscr A), T(\pi))$ is an equivalence of categories between the category of ordinary abelian varieties over $\mathbf F_q$ and the category of free $\mathbf Z$-modules $T$ endowed with an endomorphism $F$ satisfying
\begin{enumerate}
\item $F$ is semi-simple, with eigenvalues of complex absolute value $\sqrt{q}$,
\item At least half the roots in $\overline{\mathbf Q}_p$ of the characteristic polynomial of $F$ are $p$-adic units,
\item There is an endomorphism $V$ of $T$ such that $FV = q$.
\end{enumerate}
As discussed in \cite[\S 8]{Del69}, any such $(T,F)$ that is the image of a variety $\mathscr A$ through this functor determines the complex abelian variety $\mathscr A_{\C}$ up to isomorphism as $\mathscr A_{\C} \cong (T\otimes\mathbf R)/T$ (with a complex structure on $T\otimes\mathbf R$ such that $F$ is $\C$-linear; the existence and uniqueness of the appropriate complex structure is established by a theorem of Serre \cite[\S 8]{Del69}). This means that up to isomorphism, we can write $\mathscr{A}_{\mathbf C} = \mathbf C^g / \Lambda$, for a lattice $\Lambda$ in $\mathbf C^2$ and since  lifting preserves the endomorphism ring $\cO = \End(\mathscr A)$, we even have $\mathscr{A}_{\mathbf C} = \mathbf C^g / \Phi(\mathfrak m)$ for some full-rank lattice $\mathfrak m$ in $K$ with order $\cO(\mathfrak m) = \cO$, where, as above, the map $\Phi \colon K \ra \C^g$ is the CM-type of $\mathscr{A}_{\mathbf C}$. From the canonical identification between $\Phi(\mathfrak m)$ and $\H_1(\mathscr{A}_{\mathbf C}, \mathbf Z)$ (see \cite[\S 1.1]{BL04}), the functor can be interpreted as $\mathscr A \mapsto (\Phi(\mathfrak m), \rho_r(\pi))$.
This establishes a functorial map from the abelian varieties over $\mathbf F_q$ of fixed endomorphism ring $\cO$ to the complex abelian varieties $\mathbf C^g / \Phi(\mathfrak m)$ where $\mathfrak m$ are lattices in $K$ with order~${\cO(\mathfrak m) = \cO}$. Conversely, Deligne's theorem shows that any such $\mathbf C^g / \Phi(\mathfrak m)$ is the lift of an abelian variety over $\mathbf F_q$ with endomorphism ring $\cO$: the variety corresponding to the pair $(\Phi(\mathfrak m), \rho_r(\pi))$, where $\rho_r(\pi)$ is the rational representation of $\pi$. Moreover, from \cite[\S 3]{Del69}, the polarizations also lift properly, and in particular $\mathscr A$ is principally polarizable if and only if $\mathscr{A}_{\mathbf C}$ is.

\subsection{Horizontal isogeny graphs as Cayley graphs}\label{sec:isoGraphsCayleyGraphs}
Let $\pi$ be a $q$-Weil number, and let $K = \Q(\pi)$ be the corresponding CM-field,with $K_0$ its maximal real subfield. Fix an order $\cO$ in $K$, and let $\mathcal V_{\pi,\cO}$ be the set of all $\F_q$-isomorphism classes of abelian varieties defined over $\mathbf F_q$ with endomorphism ring $\cO$ in the isogeny class characterised by $\pi$. Recall that the class group $\Cl(\cO)$ acts freely on $\mathcal V_{\pi,\cO}$. One can choose any reference variety $\mathscr A$ in $\mathcal V_{\pi,\cO}$ and any subgroup $H$ in $\Cl(\cO)$, and consider the orbit $H(\mathscr A)$.

Combining the results of Deligne discussed in Section~\ref{subsec:canlift} with the theory of complex multiplication, there is an equivalence of categories between the category of objects $H(\mathscr A)$ and morphisms the isogenies between them, and the category whose objects are the ideal classes in the subgroup $H$, and the sets of morphisms from $a \in H$ to $b \in H$ are the ideals of $\cO$ in the class $a^{-1} b$. The degree of an isogeny equals the norm of the corresponding ideal. Restricting the morphisms to a finite set of generators, the latter category can be seen as a Cayley (multi)graph.

\begin{defn}[Cayley graph]\index{Cayley graph}
Let $G$ be a finite group and $S$ a generating subset of $G$, with $S = S^{-1}$. The \emph{Cayley graph} $\Cay(G,S)$ is the finite $|S|$-regular undirected graph with set of vertices $G$, and an edge between $g$ and $sg$ for any $g\in G$ and $s \in S$.
\end{defn}

\begin{rem}
The edges of $\Cay(G,S)$ can have multiplicities if $S$ is a multiset. If $\mathcal S$ is a set of labels and $f : \mathcal S \rightarrow S$ is a surjection, then $f$ naturally induces a Cayley multigraph for the set of generators $S$ whose edges are labelled by elements of $\mathcal S$.

\end{rem}

Let $\mathcal S$ be a set of ideals of $\cO$, and $S$ its image in $\Cl_K$, with $f : \mathcal S \rightarrow S$ the induced surjection. Let $\Cay(H, S \cap H)$ be the induced labelled multigraph. Let $T$ be the set of all isogenies between elements of $H(\mathscr A)$ corresponding to the ideals of $\mathcal S$. We build the graph $\mathscr G_{\mathcal S}$ with set of vertices $H(\mathscr A)$ by adding an edge between the vertices $\mathscr B$ and $\mathscr C$ for any isogeny $\mathscr B \rightarrow \mathscr C$ in $T$. Then, the equivalence of categories induces an isomorphism between the graphs $\mathscr G_{\mathcal S}$ and $\Cay(H, S \cap H)$.

\begin{example}\label{example:ppav}
If $\mathscr A$ is a principally polarizable abelian variety and ${H = \mathscr P(\cO)}$, 
the orbit $H(\mathscr A)$ (in this case also denoted $\mathscr P(\mathscr A)$) is a set of isomorphism classes of principally polarizable abelian varieties isogenous to $\mathscr A$ and with same endomorphism ring.
Via the construction described above, any choice of a generating set of $\mathscr P(\cO)$ yields a graph of
the set of vertices $\mathscr P(\mathscr A)$.
From~\cite[Theorem 5.3]{Waterhouse1969} together with~\cite[Theorem 4.5]{blake2014deuring}, the action of $\Cl(\cO)$ is transitive on the set of \emph{all} abelian varieties isogenous to $\mathscr A$ and with same endomorphism ring whenever $\mathscr A$ has maximal real multiplication (i.e., $\cO_{K_0} \subset \cO$). 
We can conclude via~\cite[\S 17]{taniyama-shimura} that when $\mathscr A$ has maximal real multiplication, the orbit $\mathscr P(\mathscr A)$ is exactly the set of all isomorphism classes of principally polarizable abelian varieties isogenous to $\mathscr A$ and with same endomorphism ring.
\end{example}

\section{Expander graphs and ray class groups}\label{secExpanderCayley}

In this section, we prove Theorem~\ref{thm:eigenCharacGeneral}, and investigate its consequences on the structure of the Cayley graphs of interest.

\subsection{Eigenvalues and Cayley graphs}\label{subsecCayleyRay}

Let $\mathscr G$ be an undirected (multi)graph with set of vertices $\mathcal V$ and set of edges $\mathcal E$. Suppose $\mathscr G$ is finite and $k$-regular, i.e., each vertex has $k$ incident edges. The \emph{adjacency operator}\index{adjacency operator} $A$ of $\mathscr G$ is the operator defined for any function $f$ from $\mathcal V$ to $\C$ by
$$Af(x) = \sum_{y \in \mathcal N_{\mathscr G}(x)}f(y),$$
for any $x \in \mathcal V$, where $\mathcal N_{\mathscr G}(x)$ denotes the (multi)set of neighbors of $x$ in $\mathscr G$.
This operator is represented by the adjacency matrix of $\mathscr G$ with respect to the basis $\{\mathbf 1_{\{x\}} : x \in \mathcal V\}$, where $\mathbf 1_S$ denotes the characteristic function of a set $S$. It is a real symmetric matrix, so by the spectral theorem, $A$ has $n = |\mathcal V|$ real eigenvalues $\lambda_1 \geq \lambda_2 \geq ... \geq \lambda_n$. Since the graph is $k$-regular, the constant function $\mathbf 1_{\mathcal V} : x \mapsto 1$ is an eigenvector with eigenvalue $k$. We call $k$ the \emph{trivial eigenvalue}\index{trivial eigenvalue}, and denote it by $\lambda_{\mathrm{triv}}$. This $\lambda_{\mathrm{triv}}$ is the largest eigenvalue in absolute value, i.e., $\lambda_1 = k$, and its multiplicity is the number of connected components of $\mathscr G$.

\begin{defn}[Expander graph]\index{expander graph}
Let $\delta > 0$. The $k$-regular graph $\mathscr G$ is \emph{(one-sided) $\delta$-expander} if $\lambda_2 \leq (1-\delta)\lambda_{\mathrm{triv}}$. It is a \emph{two-sided $\delta$-expander} if the stronger bound $|\lambda_2| \leq (1-\delta)\lambda_{\mathrm{triv}}$ holds.
\end{defn}

Observe that such a graph is connected whenever $\delta > 0$. The main reason for our interest in expander graphs is that they rapidly mix random walks. The following lemma is a classical result on expander graphs and can be found in, e.g., \cite{JMV09}.

\begin{lem}\label{lemmaRapidMix}
Let $\mathscr G$ be a finite $k$-regular graph for which the non-trivial eigenvalues $\lambda$ of the adjacency operator $A$ satisfy the bound $|\lambda| \leq c$, for some $c < k$. Let $S$ be a subset of the vertices of $\mathscr G$, and $v$ a vertex of $\mathscr G$. Any random walk from $v$ of length at least $\ds \frac{\ln(2|\mathscr G|/|S|^{1/2})}{\ln(k/c)}$ will end in $S$ with probability between $\ds \frac{1}{2}\frac{|S|}{|\mathscr G|}$ and $\ds \frac{3}{2}\frac{|S|}{|\mathscr G|}$.
\end{lem}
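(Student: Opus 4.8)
The plan is to analyze the walk through the spectral decomposition of its transition operator $M=\tfrac1k A$. For a $k$-regular (multi)graph this is exactly the transition operator of the walk, an edge of multiplicity $m$ being crossed with probability $m/k$. Since $A$ is a real symmetric matrix, $M$ is self-adjoint on $\ell^2(\mathcal V)$, with the same eigenvectors as $A$ and with eigenvalues $\lambda_i/k$. In particular $M\mathbf 1_{\mathcal V}=\mathbf 1_{\mathcal V}$; moreover the hypothesis that every non-trivial eigenvalue $\lambda$ has $|\lambda|\le c<k$ forces the trivial eigenvalue $k$ to be simple, so the orthogonal complement $\mathbf 1_{\mathcal V}^{\perp}$ is exactly the span of the non-trivial eigenvectors, is $M$-invariant, and $\|M^t g\|\le (c/k)^t\|g\|$ for all $g\in\mathbf 1_{\mathcal V}^{\perp}$ and all $t\ge 0$.

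First I would record that the distribution of the walk after $t$ steps started at $v$ is $p_t=M^t\mathbf 1_{\{v\}}$, so the probability of ending in $S$ is the inner product $\langle M^t\mathbf 1_{\{v\}},\mathbf 1_S\rangle$ (this is real, as $M$, $\mathbf 1_{\{v\}}$ and $\mathbf 1_S$ are real). Decomposing the initial delta function as $\mathbf 1_{\{v\}}=\tfrac{1}{|\mathscr G|}\mathbf 1_{\mathcal V}+g$ with $g\perp\mathbf 1_{\mathcal V}$, and using $\langle\mathbf 1_{\mathcal V},\mathbf 1_S\rangle=|S|$, one gets
$$\langle M^t\mathbf 1_{\{v\}},\mathbf 1_S\rangle=\frac{|S|}{|\mathscr G|}+\langle M^t g,\mathbf 1_S\rangle .$$
Then I would bound the error term by Cauchy--Schwarz together with the spectral gap: $|\langle M^t g,\mathbf 1_S\rangle|\le\|M^t g\|\cdot\|\mathbf 1_S\|\le (c/k)^t\|g\|\cdot|S|^{1/2}\le (c/k)^t|S|^{1/2}$, using $\|g\|\le\|\mathbf 1_{\{v\}}\|=1$ and $\|\mathbf 1_S\|=|S|^{1/2}$.

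Finally I would unwind the hypothesis on the length. The condition $t\ge\ln(2|\mathscr G|/|S|^{1/2})/\ln(k/c)$ (the logarithms are positive since $k>c$ and $2|\mathscr G|/|S|^{1/2}\ge 2|\mathscr G|^{1/2}>1$) is equivalent to $(k/c)^t\ge 2|\mathscr G|/|S|^{1/2}$, that is, to $(c/k)^t|S|^{1/2}\le\tfrac12\,|S|/|\mathscr G|$; since $c/k<1$ the same inequality holds for every length $\ge t$. Hence the error term has absolute value at most $\tfrac12\,|S|/|\mathscr G|$, so the probability of ending in $S$ lies between $\tfrac12\,|S|/|\mathscr G|$ and $\tfrac32\,|S|/|\mathscr G|$, as claimed. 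There is no serious obstacle here; the argument is the standard spectral mixing estimate. The only points that deserve attention are that one genuinely uses the \emph{two-sided} bound $|\lambda|\le c$ --- a bound on the second-largest eigenvalue alone would not control the contribution of strongly negative eigenvalues to $M^t$ for $t$ of a fixed parity --- and that self-adjointness of $M$ is what makes $\mathbf 1_{\mathcal V}^{\perp}$ invariant and supplies the operator-norm estimate $(c/k)^t$; monotonicity of $(c/k)^t$ in $t$ is what allows the statement to be phrased with ``length at least''.
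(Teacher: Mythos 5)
Your argument is correct and complete: it is the standard spectral decomposition proof (split $\mathbf 1_{\{v\}}$ into its component along $\mathbf 1_{\mathcal V}$ plus an orthogonal part, contract the latter by $(c/k)^t$, and apply Cauchy--Schwarz), which is exactly the proof given in the reference [JMV09] that the paper cites for this lemma rather than proving it itself. Your remarks on why the two-sided eigenvalue bound and self-adjointness are needed are also apt.
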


For any finite group $G$ with generating set $S$, observe that a character ${\chi : G \rightarrow \C^*}$ is an eigenvector for the adjacency operator $A$ on $\Cay(G,S)$. Indeed,
$$A\chi(x) = \sum_{s\in S}\chi(sx) = \sum_{s\in S}\chi(s)\chi(x) = \lambda_{\chi}\chi(x), \text{ where }\lambda_\chi = \sum_{s\in S}\chi(s).$$

If $G$ is abelian, these characters form a basis of the $\C$-vector space of functions of $G$. In particular, any eigenvalue is of the form $\lambda_\chi$ for some character $\chi$. The trivial eigenvalue corresponds to the trivial character $\mathbf 1_G$.

\subsection{Proof of Theorem \ref{thm:eigenCharacGeneral}}
Since $G$ is abelian, any character $\chi$ of $H$ can be extended to a character of $G$. Take any  such extension and, by abuse of notation, also denote it by $\chi$. Note that for any ideal $\frakl$ of $\cO_K$ coprime to $\mathfrak m$, we have
$$
\sum_{\theta \in \widehat{G/H}} \theta([\frakl]H) = 
\begin{cases}
[G:H] & \text{if } [\frakl] \in H,\\ 
0 & \text{otherwise},  
\end{cases}
$$
where $\widehat{G/H} = \Hom(G/H, \C^*)$ is the character group of the quotient $G/H$. 
Therefore this sum can be used to filter the condition that $[\frakl] \in H$, and we can rewrite 
\begin{alignat*}{1}
\lambda_{\chi} &= \sum_{\frakl \in \cT_{H,\mathfrak m}(B)} \chi([\frakl])
 = \frac{1}{[G:H]} \sum_{\substack{\frakl: N\frakl < B \text{ prime}\\ (\frakl, \mathfrak m) = 1}} \chi([\frakl]) \sum_{\theta \in \widehat{G/H}} \theta([\frakl]H)\\
& = \frac{1}{[G:H]}  
\sum_{\theta \in \widehat{G/H}} \sum_{\substack{\frakl : N\frakl < B \text{ prime} \\ (\frakl, \mathfrak m) = 1}} \chi([\frakl])\theta([\frakl ]H). 
\end{alignat*}
We are then left with estimating a character sum $\sum\chi([\frakl])\theta([\frakl]H)$. Each of the summands of the latter defines a multiplicative function 
\begin{alignat*}{1}
\nu_{\chi, \theta} \colon \mathscr I_{\mathfrak m}(K) & \longrightarrow \C^* : \frakl  \longmapsto \chi([\frakl])\theta([\frakl]H)
\end{alignat*}
where 
$\mathscr I_{\mathfrak m}(K)$ is the group of fractional ideals of $K$ coprime to $\mathfrak m$. It extends to a function of $\mathscr I(K)$, the group of all the fractional ideals of $K$, by setting $\nu_{\chi, \theta}(\frakl) = 0$ for all prime divisors $\frakl$ of $\mathfrak m$.
The expression of ${\lambda}_{\chi}$ becomes
\begin{equation}\label{eq:formulaLambdaChi}
{\lambda}_{\chi} = \frac{1}{[G:H]} 
\sum_{\theta \in \widehat{G/H}} \sum_{\frakl : N\frakl < B \text{ prime}} \nu_{\chi, \theta}(\frakl).
\end{equation}

From the classical estimate that can be found in \cite[Th.5.15]{IK04}, we have
$$\sum_{\mathfrak a : N\mathfrak a < B} \Lambda(\mathfrak a)\nu_{\chi, \theta}(\mathfrak a) = \delta(\nu_{\chi, \theta})B + O\left(nB^{1/2}\ln(B)\ln(Bd_KN\mathfrak m)\right),$$
where $\Lambda$ is the von Mangoldt function (i.e., $\Lambda(\mathfrak a)$ is $\ln N\mathfrak l$ if $\mathfrak a$ is a power of a prime ideal $\mathfrak l$, and 0 otherwise), and $\delta(\nu_{\chi, \theta})$ is 1 if $\nu_{\chi, \theta}$ is principal, and 0 otherwise (a principal character is a character that only takes the values 1 or 0). 
Observe that if $\nu_{\chi, \theta}$ is principal, then $\chi$ must be the trivial character, so that $\delta(\nu_{\chi, \theta}) = \delta(\chi) \delta(\theta)$.
Indeed, suppose that $\nu_{\chi, \theta}$ is principal, and let $[\frakl] \in H$, for a prime $\frakl$ coprime to $\mathfrak m$. Then,
$$1 = \nu_{\chi, \theta}(\frakl) = \chi([\frakl])\theta([\frakl]H) = \chi([\frakl])\theta(1_{G/H}) = \chi([\frakl]),$$
so $\chi$ must be the trivial character of $H$.

We now want to replace each instance of $\Lambda(\mathfrak a)$ in the above sum by $P(\mathfrak a)$, where 
$$
P(\mathfrak a) = 
\begin{cases}
\ln N\mathfrak a & \text{if $N\mathfrak a$ is prime}, \\ 
0 & \text{otherwise}.
\end{cases}
$$
To do so, it is sufficient to prove that
\begin{equation}\label{eq:diffMangPrime}
\sum_{\mathfrak a :N\mathfrak a <B}\Lambda(\mathfrak a)\nu_{\chi, \theta}(\mathfrak a) - \sum_{\mathfrak a :N\mathfrak a <B}P(\mathfrak a)\nu_{\chi, \theta}(\mathfrak a) = O\left(nB^{1/2}\right).
\end{equation}
The non-zero terms $(\Lambda(\mathfrak a) - P(\mathfrak a))\nu_{\chi, \theta}(\mathfrak a)$ correspond to ideals $\mathfrak a$ which are powers of a prime ideal $\mathfrak l$, and $N\mathfrak a = N\mathfrak l^k$ is not a prime number -- but it is a power of a prime $\ell$. Since $K$ is of degree $n$, there are at most $n$ different prime ideals $\mathfrak l$ above any given prime number $\ell$. Therefore the difference~\eqref{eq:diffMangPrime} is bounded in absolute value by
$$n\sum_{\substack{\ell^k < B\\ k \geq 2}} \ln \ell = n\sum_{\substack{\ell < B^{1/2}\\2 \leq k < \frac{\ln B}{\ln \ell}}} \ln \ell \leq n\sum_{\ell < B^{1/2}} \ln \ell\frac{\ln B}{\ln \ell} = n\pi(B^{1/2})\ln B,$$
which, by the Prime Number Theorem, is $O(nB^{1/2})$. Therefore,
$$\sum_{\mathfrak a : N\mathfrak a < B} P(\mathfrak a)\nu_{\chi, \theta}(\mathfrak a) = \delta(\nu_{\chi, \theta})B + O\left(nB^{1/2}\ln(B)\ln(Bd_KN\mathfrak m)\right).$$
Applying the Abel partial summation formula, we derive that
$$\sum_{\frakl : N\frakl < B\text{ prime}} \nu_{\chi, \theta}(\frakl) = \delta(\nu_{\chi, \theta})\mathrm{li}(B) + O\left(nB^{1/2}\ln(Bd_KN\mathfrak m)\right),$$
where $\mathrm{li}$ denotes the logarithmic integral. Replacing this into the expression~\eqref{eq:formulaLambdaChi} of ${\lambda}_{\chi}$, we finally obtain
$${\lambda}_{\chi} = \frac{\delta(\chi)}{[G:H]}\mathrm{li}(B) + O\left(nB^{1/2}\ln(Bd_KN\mathfrak m)\right),$$
which proves the theorem.\qed

\subsection{Spectral gaps for subgroups of ideal class groups}\label{subsec:spectral-quartic}
Let $K$ be any number field of degree $n$, $\cO$ an order of conductor $\mathfrak f$ in $K$, and $H$ any subgroup of $\Cl(\cO)$. Let $B > 0$, $\mathfrak m$ an integral ideal of $\cO_K$, and define the following set of ideals of $\cO_K$, 
$$
\cS_B= \{\frakl \mid N\frakl < B\text{ is prime}, (\frakl, \mathfrak f \mathfrak m) = 1, \text{ and }[\frakl\cap \cO] \in H\}, 
$$
where $[\frakl\cap \cO]$ is the class in $\Cl(\cO)$. Let $S_B$ be the multiset of its image in the class group. Using Theorem \ref{thm:eigenCharacGeneral}, one can bound the spectral gap of $\mathscr G_B = \Cay( H, S_B)$.

\begin{thm}\label{thm:eigenCharac}
For any character $\chi$ of $H$,the corresponding eigenvalue of $\mathscr G_B$ is
$${\lambda}_{\chi} = \frac{\delta(\chi)}{[\Cl(\cO):H]}\mathrm{li}(B) + O(nB^{1/2}\ln(Bd_KN(\mathfrak f\mathfrak m))),$$
where $\delta(\chi)$ if $1$ if $\chi$ is trivial, and $0$ otherwise.
\end{thm}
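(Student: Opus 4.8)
The plan is to deduce Theorem~\ref{thm:eigenCharac} from Theorem~\ref{thm:eigenCharacGeneral} by exhibiting $\Cl(\cO)$ as a quotient of a narrow ray class group. Let $\mathfrak m'$ be the modulus of $K$ with finite part $\mathfrak f\mathfrak m$ and infinite part all real embeddings of $K$, and let $G = \Cl_{\mathfrak m'}(K)$, the narrow ray class group of $K$ modulo $\mathfrak f\mathfrak m$. Using the isomorphism $\Cl(\cO) \cong \mathscr I_\mathfrak f(K)/P_{K,\cO}^\mathfrak f$ recalled in Section~\ref{subsec:ringclass}, I would first check that the inclusion $\mathscr I_{\mathfrak m'}(K) \hookrightarrow \mathscr I_\mathfrak f(K)$ followed by the quotient map induces a surjection $\psi\colon G \twoheadrightarrow \Cl(\cO)$ with $\psi([\frakl]) = [\frakl\cap\cO]$ for every prime $\frakl$ of $\cO_K$ coprime to $\mathfrak f\mathfrak m$. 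Surjectivity follows from~\cite[Th. 3.8, Th. 3.11]{Lv2015} together with the standard compatibility of ray class groups (every class of $\Cl(\cO)$ has a representing ideal coprime to $\mathfrak f\mathfrak m$), and well-definedness amounts to the containment $P_{K,1}^{\mathfrak m'} \subseteq P_{K,\cO}^\mathfrak f$, which is a routine clearing-of-denominators check: any generator $\alpha\cO_K$ of $P_{K,1}^{\mathfrak m'}$ satisfies $\alpha\equiv 1\pmod{\mathfrak f}$, and thus equals $\beta\cO_K(\gamma\cO_K)^{-1}$ with $\beta,\gamma\in 1+\mathfrak f\subseteq\cO$ and $\beta\cO+\mathfrak f=\gamma\cO+\mathfrak f=\cO$.

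Next, put $\widetilde H = \psi^{-1}(H) \subseteq G$. Then $G/\widetilde H \cong \Cl(\cO)/H$ through $\psi$, so $[G:\widetilde H] = [\Cl(\cO):H]$, and $\psi$ restricts to a surjection $\widetilde H \twoheadrightarrow H$. Given a character $\chi$ of $H$, let $\widetilde\chi = \chi\circ\psi$; this is a character of $\widetilde H$, trivial if and only if $\chi$ is, whence $\delta(\widetilde\chi) = \delta(\chi)$. Now I would compare $\mathscr G_B = \Cay(H,S_B)$ with the graph $\Cay(\widetilde H,T_{\widetilde H,\mathfrak f\mathfrak m}(B))$ that Theorem~\ref{thm:eigenCharacGeneral} attaches to the data $(K,\mathfrak f\mathfrak m,G,\widetilde H)$. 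For a prime $\frakl$ coprime to $\mathfrak f\mathfrak m$ one has $[\frakl\cap\cO]=\psi([\frakl])$, so the membership condition $[\frakl\cap\cO]\in H$ defining $\cS_B$ is exactly $[\frakl]\in\widetilde H$, and therefore
\begin{equation*}
\lambda_\chi \;=\; \sum_{s\in S_B}\chi(s)\;=\;\sum_{\frakl\in\cS_B}\chi\bigl(\psi([\frakl])\bigr)\;=\;\sum_{\frakl\in\cT_{\widetilde H,\mathfrak f\mathfrak m}(B)}\widetilde\chi([\frakl])\;=\;\lambda_{\widetilde\chi},
\end{equation*}
the last quantity being the eigenvalue of $\Cay(\widetilde H,T_{\widetilde H,\mathfrak f\mathfrak m}(B))$ attached to $\widetilde\chi$; the only slippage here is between $N\frakl<B$ and $N\frakl\le B$, which alters at most $n$ summands (the primes of $\cO_K$ of norm exactly $B$, each contributing a term of modulus $\le 1$) and is absorbed into the error term below.

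It then remains to invoke Theorem~\ref{thm:eigenCharacGeneral}, which yields
\begin{equation*}
\lambda_{\widetilde\chi}\;=\;\frac{\delta(\widetilde\chi)}{[G:\widetilde H]}\,\mathrm{li}(B)\;+\;O\!\left(nB^{1/2}\ln\bigl(Bd_KN(\mathfrak f\mathfrak m)\bigr)\right),
\end{equation*}
and to substitute $\delta(\widetilde\chi)=\delta(\chi)$ and $[G:\widetilde H]=[\Cl(\cO):H]$ to obtain the claimed formula. The only genuine content beyond bookkeeping is the construction of $\psi$ — the containment $P_{K,1}^{\mathfrak m'}\subseteq P_{K,\cO}^\mathfrak f$ and the surjectivity onto $\Cl(\cO)$ — and I expect the one place that needs care is matching the coprimality and class-membership conditions precisely, so that passing from the set $\cS_B$ of ideals of $\cO$ to the set $\cT_{\widetilde H,\mathfrak f\mathfrak m}(B)$ of ideals of $\cO_K$ neither gains nor loses primes.
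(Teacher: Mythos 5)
Your proposal is correct and follows essentially the same route as the paper: both pull $H$ back to $\widetilde H = \psi^{-1}(H)$ inside the narrow ray class group modulo $\mathfrak f\mathfrak m$, identify the generating multiset $S_B$ with the image of $T_{\widetilde H,\mathfrak f\mathfrak m}(B)$, match each character $\chi$ of $H$ with its pullback to $\widetilde H$, and invoke Theorem~\ref{thm:eigenCharacGeneral}. Your write-up is somewhat more explicit than the paper's (the containment $P_{K,1}^{\mathfrak m'}\subseteq P_{K,\cO}^{\mathfrak f}$ and the $N\frakl<B$ versus $N\frakl\le B$ discrepancy are checked rather than left implicit), but the underlying argument is the same.
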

\begin{proof}
Using the notations from Section~\ref{subsec:ringclass}, the group $P_{K,1}^\mathfrak f$ is a subgroup of $P_{K,\cO}^\mathfrak f$, so there is a natural surjection $\Cl_\mathfrak f(K) \rightarrow \Cl(\cO)$. Furthermore, the canonical injection of $\mathscr I_{\mathfrak f\mathfrak m} (K)$ in $\mathscr I_\mathfrak f (K)$ induces a surjection from $\Cl_{\mathfrak f\mathfrak m}(K)$ to $\Cl_\mathfrak f(K)$. Therefore we have a natural surjection $\pi : \Cl_{\mathfrak f\mathfrak m}(K) \rightarrow \Cl(\cO)$, which sends the class of any integral ideal $\mathfrak a$ of $\cO_K$ to the class of $\mathfrak a \cap \cO$.
Consider the subgroup $\widetilde{H} = \pi^{-1}(H)$ of $\Cl_{\mathfrak f \mathfrak m}(K)$, and its Cayley graph $\widetilde{\mathscr G}_B = \Cay(\widetilde{H}, T_{\widetilde{H}, \mathfrak f\mathfrak m}(B))$ where $T_{\widetilde{H}, \mathfrak f \mathfrak m}(B)$
 is the multiset defined in the statement of Theorem~\ref{thm:eigenCharacGeneral}.
The Cayley graph $\mathscr G_B$ on $H$ is the image of the Cayley graph $\widetilde{\mathscr G}_B$ on $\widetilde H$ via the projection
 $\pi$, taking into account the multiplicity of the edges. The eigenvalues of 
 $\mathscr G_B$ are exactly the eigenvalues $\lambda_{\theta}$ of $\widetilde{\mathscr G}_B$ corresponding to characters $\theta$ of $\widetilde H$ that are trivial on the 
 kernel of $\pi|_{\widetilde H} : \widetilde H \rightarrow H$. The result follows by applying Theorem \ref{thm:eigenCharacGeneral} on $\widetilde{\mathscr G}_B$.
\end{proof}
\begin{cor}\label{coroBoundExpander}
For any $0 < \delta < 1$ and $\varepsilon > 0$, there is a function 
$$B_{\delta,\varepsilon}(H, \mathfrak m) = O\left( \left(n [\Cl(\cO):H] \ln(d_K N (\mathfrak f \mathfrak m))\right)^{2 + \varepsilon}\right),$$
such that $\mathscr G_{B_{\delta,\varepsilon}(H, \mathfrak m)}$ is a two-sided $\delta$-expander.
\end{cor}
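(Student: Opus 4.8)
The plan is to deduce Corollary~\ref{coroBoundExpander} directly from the eigenvalue estimate of Theorem~\ref{thm:eigenCharac} by a standard "large $B$ wins" argument. Recall that $\mathscr G_B = \Cay(H, S_B)$ is $k$-regular with trivial eigenvalue $\lambda_{\mathrm{triv}} = |S_B| = \lambda_{\mathbf 1_H}$, which by Theorem~\ref{thm:eigenCharac} (applied to the trivial character) equals
$$
\lambda_{\mathrm{triv}} = \frac{\mathrm{li}(B)}{[\Cl(\cO):H]} + O\!\left(nB^{1/2}\ln(Bd_KN(\mathfrak f\mathfrak m))\right),
$$
while every non-trivial character $\chi$ gives $|\lambda_\chi| = O\!\left(nB^{1/2}\ln(Bd_KN(\mathfrak f\mathfrak m))\right)$. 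So to get a two-sided $\delta$-expander it suffices to choose $B$ so that the error term is at most $(1-\delta)/[\Cl(\cO):H]$ times the main term $\mathrm{li}(B)$, say
$$
C\, nB^{1/2}\ln(Bd_KN(\mathfrak f\mathfrak m)) \;\le\; \frac{1-\delta}{[\Cl(\cO):H]}\cdot\frac{1}{2}\,\mathrm{li}(B),
$$
for the absolute constant $C$ from the theorem; the factor $\tfrac12$ absorbs the error term appearing in $\lambda_{\mathrm{triv}}$ itself. Using $\mathrm{li}(B) \gg B/\ln B$, this inequality is implied by something of the shape $B^{1/2} \gg [\Cl(\cO):H]\, n\, (\ln B)\ln(Bd_KN(\mathfrak f\mathfrak m))$, i.e. $B^{1/2}$ must dominate a quantity that is (up to log factors in $B$) linear in $n[\Cl(\cO):H]\ln(d_KN(\mathfrak f\mathfrak m))$.

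The core of the argument is then the elementary observation that if one writes $L = n[\Cl(\cO):H]\ln(d_K N(\mathfrak f\mathfrak m))$ and seeks $B$ with $B^{1/2} \ge c\, L\, (\ln B)^2$ (the $(\ln B)^2$ crudely bounding $(\ln B)\ln(Bd_KN(\mathfrak f\mathfrak m))$ once $B \ge d_KN(\mathfrak f\mathfrak m)$, which we may assume since enlarging $B$ only helps), then it is enough to take $B = O(L^{2+\varepsilon})$: indeed for $B$ of that size, $\ln B = O(\ln L)$, so $L(\ln B)^2 = O(L(\ln L)^2) = O(L^{1 + \varepsilon/2})$, whence $B^{1/2} = \Omega(L^{1+\varepsilon/2})$ exceeds it for $L$ (equivalently $B$) large enough. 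One then defines $B_{\delta,\varepsilon}(H,\mathfrak m)$ to be, say, the least such $B$, together with a fixed additive constant to handle the finitely many small cases; the implied constant in $B_{\delta,\varepsilon}(H,\mathfrak m) = O((L)^{2+\varepsilon})$ depends on $\delta$ and $\varepsilon$ but, crucially, not on $K$, $\cO$, $H$ or $\mathfrak m$, because all the constants feeding into it (the constant $C$ above, the implied constants in $\mathrm{li}(B)\gg B/\ln B$ and in the Prime Number Theorem) are absolute.

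Concretely I would carry this out in three steps: (i) record that $\lambda_{\mathrm{triv}} = |S_B| \ge \tfrac12 \mathrm{li}(B)/[\Cl(\cO):H]$ and $|\lambda_\chi| \le C nB^{1/2}\ln(Bd_KN(\mathfrak f\mathfrak m))$ for non-trivial $\chi$, valid once $B$ exceeds an absolute threshold (so that the main term genuinely dominates in $\lambda_{\mathrm{triv}}$); (ii) reduce the expander condition $|\lambda_\chi| \le (1-\delta)\lambda_{\mathrm{triv}}$ to a clean inequality of the form $B^{1/2} \ge c_{\delta}\, L\,(\ln B)^2$ using $\mathrm{li}(B) \gg B/\ln B$; (iii) verify that $B = O(L^{2+\varepsilon})$ satisfies it, by the log-versus-power estimate above, and set $B_{\delta,\varepsilon}(H,\mathfrak m)$ accordingly.

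I do not expect a genuine obstacle here — this is a packaging argument — but the one point requiring care is bookkeeping the dependence of constants: one must make sure that the "$O$" in the statement swallows only $\delta$ and $\varepsilon$, which forces us to keep $n$, $[\Cl(\cO):H]$, $d_K$, $N(\mathfrak f\mathfrak m)$ all inside $L$ rather than hiding any of them in constants, and to invoke only the \emph{absolute} form of the Prime Number Theorem / the bound $\mathrm{li}(B)\gg B/\ln B$. A secondary subtlety is that $B_{\delta,\varepsilon}(H,\mathfrak m)$ must be honestly defined as a function (e.g. the least integer $B$ satisfying the inequality of step (ii), or any explicit closed form such as $\lceil (c_\delta L \ln(c_\delta L))^{2+\varepsilon}\rceil$) so that "there is a function" in the statement is literally met; one then checks the asserted $O$-bound for that choice.
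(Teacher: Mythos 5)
Your proposal is correct and follows essentially the same route as the paper's proof: both apply Theorem~\ref{thm:eigenCharac} to the trivial and non-trivial characters, bound $\ln(Bd_KN(\mathfrak f\mathfrak m))$ by $\ln(B)\bigl(\ln(d_KN(\mathfrak f\mathfrak m))+1\bigr)$, use $\mathrm{li}(B)\geq B/\ln B$, and then absorb the residual $\ln^2(B)$ factor via the observation that $x^{1/(2+\varepsilon)}=O(x^{1/2}/\ln^2 x)$, with constants depending only on $\delta$ and $\varepsilon$. The paper simply writes the resulting $B_{\delta,\varepsilon}(H,\mathfrak m)$ as an explicit closed-form maximum rather than "the least such $B$", which is the only (cosmetic) difference.
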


\begin{proof}
Let $x > 0$, and write $k = [\Cl(\cO):H]$. The graph $\mathscr G_x$ is a two-sided $\delta$-expander if
$|\lambda_\chi| \leq (1-\delta)\lambda_{\mathrm{triv}}$ for any non-trivial character $\chi$. From Theorem \ref{thm:eigenCharac}, and the fact that $\mathrm{li}(x) \sim x/\ln(x)$ and $\mathrm{li}(x) \geq x/\ln(x)$ for any $x \geq 4$, there are absolute constants $C$ and $D$ such that for any $x \geq C$, we have $$\lambda_{\mathrm{triv}} \geq \frac{x}{\ln(x)k} - Dnx^{1/2}\ln(xd_KN(\mathfrak f\mathfrak m)),$$ and $|\lambda| \leq Dnx^{1/2}\ln(xd_KN(\mathfrak f\mathfrak m))$. So
\begin{alignat*}{1}
\frac{\lambda_{\mathrm{triv}}}{|\lambda|} & \geq \frac{2x^{1/2}}{\ln^2(x)D k n (\ln(d_KN(\mathfrak f\mathfrak m))+1)} - 1.
\end{alignat*}
We have that $x^{1/(2+\varepsilon)} = O(x^{1/2}/\ln^2(x))$ for any $\varepsilon > 0$, so considering larger constants $C$ and $D$ if necessary, we have the inequality
$$\frac{\lambda_{\mathrm{triv}}}{|\lambda|} \geq \frac{2x^{1/(2+\varepsilon)}}{D k n (\ln(d_KN(\mathfrak f\mathfrak m))+1)} - 1.$$
The constants $C$ and $D$ are not absolute anymore but they only depend on $\varepsilon$. Let
$$B_{\delta,\varepsilon}(H, \mathfrak m) = \max\left(C, \left(\frac{1}{2}\left(\frac{1}{1-\delta} + 1\right)(D k n (\ln(d_KN(\mathfrak f\mathfrak m))+1)\right)^{2 + \varepsilon}\right).$$
Then, for $x = B_{\delta,\varepsilon}(H, \mathfrak m)$, we have
$\frac{\lambda_{\mathrm{triv}}}{|\lambda|} \geq \frac{1}{1-\delta},$
so $\mathscr G_x$ is $\delta$-expander.
\end{proof}

\subsection{Proof of Theorem~\ref{thm:rapidmixgeneral}} Theorem~\ref{thm:rapidmixgeneral} is now an easy combination of the graph isomorphism expounded in Section~\ref{sec:isoGraphsCayleyGraphs}, together with Corollary~\ref{coroBoundExpander} establishing that these graphs are expanders, and Lemma~\ref{lemmaRapidMix} on random walks on such graphs.

\section{Random walks on isogeny graphs of Jacobians in genus 2}\label{sec:isoggraphsjac}

Throughout this section, we will restrict to ordinary abelian surfaces that are Jacobians of genus 2 hyperelliptic curves over a finite field $\F_q$.
Let $\mathscr J = \Jac(\mathscr C)$ be such a Jacobian with endomorphism algebra $K$ and whose endomorphism ring is isomorphic to an order $\cO$ in $K$. Let $\cO_0 = \cO \cap K_0$ where $K_0$ is the real subfield of $K$. Let $\mathscr A$ be the isomorphism class of $\mathscr J$ as an abelian variety.

Consider the orbit $\mathscr P(\mathscr A)$ of the action of $\mathscr P(\cO)$ on $\mathscr A$. The choice of any set of ideals generating $\mathscr P(\cO)$ yields an isogeny graph on the set of vertices $\mathscr P(\mathscr A)$, as described in Example~\ref{example:ppav}.
Now, Theorem~\ref{thm:rapidmixgeneral} provides generating sets $\mathcal S$ with very convenient properties: (i) the corresponding isogeny graph rapidly mixes random walks, and (ii) every edge is an isogeny of small prime degree. In fact, all the occuring isogenies are computable in polynomial time by a recent algorithm of Dudeanu, Jetchev and Robert~\cite{dudeanu-jetchev-robert,Dudeanu16}~(henceforth, the DJR algorithm).

\subsection{Computing isogenies of small degree}\label{subsec:DJR}
More precisely, the DJR algorithm allows to compute any isogeny from $\mathscr J$, defined over $\F_q$ and of odd prime degree $\ell$ (i.e., given a generator of the kernel, it finds an equation of a hyperelliptic curve $\mathscr C'$ such that the target Jacobian is isomorphic to $\Jac(\mathscr C')$) under the following conditions:
\begin{enumerate}
\item\label{condDJB1} $\mathscr J$ has maximal real multiplication, i.e., $\cO_0$ is the maximal order of~$K_0$,
\item\label{condDJB2} the index $[\cO:\Z[\pi,\bar{\pi}]]$ is prime to $2\ell$, and
\item\label{condDJB3} there exists a totally positive element $\beta \in \cO_0$ of norm $\ell$ which annihilates the kernel of the isogeny (the isogeny is called \emph{$\beta$-cyclic}, and the polarisation computed on the target curve depends on the choice of this $\beta$).
\end{enumerate}
The cost of the algorithm is $O(\ell^2)$ operations in $\F_q$, assuming some precomputations of polynomial time in $\log q$ and $\ell$ (see~\cite[Th. 4.8.2]{Dudeanu16}).

Observe that Condition~\eqref{condDJB3} exactly means that the isogeny corresponds to an ideal in the kernel $\mathscr P(\cO)$ of the map $N_{K/K_0} : \Cl(\cO) \rightarrow \Cl^+(\cO_0)$. Therefore this condition is, by construction, satisfied by all the isogenies of the graph.
Also, we can choose the generating set $\mathcal S$ so that it does not contain any ideal of norm dividing the index $[\cO:\Z[\pi,\bar{\pi}]]$, so the isogenies of the graph all satisfy Condition~\eqref{condDJB2} if and only if $[\cO:\Z[\pi,\overline{\pi}]]$ is odd.
Therefore, the conditions
\begin{enumerate}
\item\label{condDJB1} $\mathscr J$ has maximal real multiplication, and
\item\label{condDJB2} the index $[\cO:\Z[\pi,\overline{\pi}]]$ is prime to $2$,
\end{enumerate}
are sufficient for constructing a graph whose edges can all be computed by the DJR algorithm.
Before the work of Dudeanu, Jetchev and Robert, one was only able to compute $(\ell,\ell)$-isogenies \cite{cosset-robert} that were not sufficient to obtain a connected graph.

For the same computational cost, the DJR algorithm can compute the image of a point of order coprime to $2q[\cO:\Z[\pi,\overline{\pi}]]$, given some additional precomputations of polynomial cost in $\mathrm{Disc}(K_0)$.

\subsection{Navigating in the graph with polarizations}\label{subsec:navPolarizations}

The vertices of the graph represent principally \emph{polarizable} (as opposed to \emph{polarized}) abelian surfaces. 
As a consequence, two distinct Jacobians can represent the same vertex if they are isomorphic as abelian varieties, but have non-isomorphic polarizations.
For computations, it is important to be able to determine whether two vertices of the graph are distinct or not, and to this end, the way the vertices are represented is crucial.

As explained in \cite{cosset-robert} and \cite{dudeanu-jetchev-robert}, it is possible to distinguish
between isomorphism classes of Jacobians as principally polarized abelian varieties by simply comparing the Rosenhain invariants\footnote{Since the varieties are absolutely simple, ordinary, and over $\F_q$, two of them are $\F_q$-isomorphic if and only if they are $\bar \F_q$-isomorphic (a consequence of~\cite[Th. 7.2]{Waterhouse1969}; see~\cite[Rem. 3.3]{BJW16}).}. The DJR algorithm computes these explicitly for the target curve of an isogeny. Therefore, if $(\cO_{K_0}^\times)^+/N_{K/K_0}(\cO_{K}^\times)$ is trivial, as discussed in Section~\ref{par:polarizations}, the map $\mathfrak C(\cO) \rightarrow \mathscr P(\cO)$ forgetting the polarization is an isomorphism so the vertices of the graph can simply be represented as Jacobians, or their Rosenhain invariants.

But if $(\cO_{K_0}^\times)^+/N_{K/K_0}(\cO_{K}^\times)$ is of order 2, more work is required. In this case, for any Jacobian $\mathscr J_1$, there exists another Jacobian $\mathscr J_2$ which is isomorphic as a non-polarized abelian variety (and thus represents the same vertex in the graph), but not as a \emph{principally polarized} abelian variety. To solve this issue, one can simply represent the vertices of the graph as pairs of Jacobians, isomorphic as abelian varieties, but with non-isomorphic polarizations. It is still possible to use the DJR algorithm to navigate in this graph. Indeed, let $u \in (\cO_{K_0}^\times)^+$ be a generator of $(\cO_{K_0}^\times)^+ / N_{K /K_0} (\cO_K^\times)$. Starting from $\mathscr J$, given an appropriate kernel, the DJR algorithm chooses a $\beta$ and computes the isogeny as a $\beta$-isogeny, resulting in a target Jacobian $\mathscr J_1$. If $\beta$ is replaced by $u\beta$, the DJR algorithm finds the Jacobian $\mathscr J_2$ which is isomorphic to $\mathscr J_1$ as an abelian variety, but with a different polarization. Therefore the representation of the vertex $\{\mathscr J_1, \mathscr J_2\}$ can be fully computed.

A last point must be addressed: given a Jacobian $\mathscr J$ and a prime $\ell$, the DJR algorithm allows to find isogenies of degree $\ell$ from that Jacobian, but it is unclear a priori which of these isogenies remain within the graph we constructed. Indeed, it could well be that some of these isogenies change the endomorphism order $\cO$. Luckily, this is not a concern if only primes $\ell$ that cannot change the endomorphism order are picked. An isogeny over $\F_q$ of degree $\ell$ can change the order only if $\ell$ divides the index $[\cO_K:\Z[\pi,\bar \pi]]$ (see~\cite[Prop. 3.4]{BJW16}). Therefore, in the generating set $\mathcal S$, we avoid the prime ideals dividing that index.

\subsection{Proof of Theorem \ref{thm:randselfred}}
Let $\mathcal W\subset \mathcal V$ be the subset of all isomorphism classes for which the algorithm $\mathcal A$ solves the DLP.
For any two polarised abelian varieties $\mathscr A$ and $\mathscr B$, write $\mathscr A \sim \mathscr B$ if they are isomorphic as non-polarized abelian varieties. 
Recall that as discussed in Section~\ref{subsec:navPolarizations}, if $\mathcal A$ can solve the DLP on one Jacobian $\mathscr J \in \mathcal W$, then it can solve 
the DLP on the other Jacobians $\mathscr J' \sim \mathscr J$.
Let $V = \mathcal V/\sim$ and $W = \mathcal W/\sim$.
Let $\pi$ be a $q$-Weil number characterising the fixed isogeny class.
From Example~\ref{example:ppav}, the set $V$ is naturally in bijection with $\mathscr P(\mathscr A)$, the orbit for the CM-action of $\mathscr P(\cO)$.
We can therefore apply Theorem~\ref{thm:rapidmixgeneral} on the graph with set of vertices $V$ induced by the set of invertible ideals in $\cO$, coprime to $2[\cO_K:\Z[\pi,\bar \pi]]$, of prime norm bounded by 
$$B_\varepsilon(\cO) = O\left(\left(h_{\cO_0}^+\ln \left (d_KN\mathfrak f [\cO_K:\Z[\pi,\bar \pi]]\right)\right)^{2 + \varepsilon}\right) = O\left(\left(h_{\cO_0}^+\log q\right)^{2 + \varepsilon}\right),$$
where $\mathfrak f$ is the conductor of $\cO$.
 Any path of length at least
$\ln(2| V|/| W|^{1/2}) \leq \ln(2h_\cO)$ starting from any vertex will end in $ W$ with probability between $\mu/2$ and $3\mu/2$. So the strategy to solve DLP on $\mathscr A \in  V$ is to build random paths from $\mathscr A$ in $\mathscr G_B$ of length $\ln(2h_\cO)$ until one of them ends in $ W$, which happens with probability higher than $\mu/2$, so after an expected number of independent trials smaller than $2/\mu$. The length of each path is polynomial in $\ln(h_\cO)$, and the degree of each isogeny on the path is bounded by $B_\varepsilon(\cO)$. So the algorithm computes a polynomial (in $\log q$) number of isogenies, and each of them can be computed in polynomial time (in $\log q$, $h_{\cO_0}^+$ and $\mathrm{Disc}(K_0)$) by the DJR algorithm~\cite{dudeanu-jetchev-robert,Dudeanu16}.

\section{Computing an explicit isogeny between two given Jacobians}\label{sec:galbraith}

Let $\mathscr C$ and $\mathscr C'$ be two hyperelliptic curves of genus 2, defined over the finite field~$\F_q$. Let $\mathscr A = \Jac(\mathscr C)$ and $\mathscr B = \Jac(\mathscr C')$ be their Jacobians. These are principally polarized abelian varieties of dimension 2, so by Tate's isogeny theorem \cite{tate:isogeny}, $\mathscr A$ and $\mathscr B$ are isogenous over $\F_q$ if and only if their  Frobenius polynomials are the same. 
We know how to compute the latter (see \cite{pila:frobenius}, or \cite{GH00} for an efficient algorithm whose running time is $O((\log q)^9)$), and thereby decide whether or not there is an isogeny $\mathscr A \rightarrow \mathscr B$ defined over $\F_q$. 
Yet, once we know that $\mathscr A$ and $\mathscr B$ are isogenous, it is not clear how to explicitly compute an isogeny between them. 
In this section, the expander properties of horizontal isogeny graphs are used to construct and analyse an algorithm similar to Galbraith's algorithm~\cite{Gal99} to build an isogeny between two such varieties having the same endomorphism ring. The contribution of this new algorithm is two-fold. First, the analysis of Galbraith's algorithm relies, in addition to GRH, on some heuristic assumptions on the growing rate of some trees built in the isogeny graph. Using expander properties of these graphs, our analysis relies solely on GRH. Second, while Galbraith's algorithm constructs isogenies between elliptic curves, we provide a more general framework for large families of horizontal isogeny graphs. Precisely, we require

\begin{enumerate}
\item An order $\cO$ of conductor $\mathfrak f$ in a CM-field $K$, and two isogenous abelian varieties $\mathscr A$ and $\mathscr B$ over a finite field $\F_q$ with endomorphism ring $\cO$;
\item A set $\mathcal S$ of ideals in $\cO$ generating a subgroup $H$ of the class group $\Cl(\cO)$, such that the orbits $H(\mathscr A)$ and $H(\mathscr B)$ coincide; 
\item The isogeny graph $\mathscr G$ induced by the action of $H$ on $H(\mathscr A)$ has the rapid mixing property, as described in Theorem~\ref{thm:rapidmixgeneral};
\item The isogenies corresponding to the edges of the graph can be computed in time bounded by some $t_H > 0$.
\end{enumerate}

For elliptic curves, one can choose $H = \Cl(\cO)$, and $\mathcal S$ the set of all ideals of prime norm bounded by a bound $B = O(\log(d_KN\mathfrak f)^{2+\varepsilon}) = O(\left(\log q\right)^{2 + \varepsilon})$. All these isogenies can be computed in time $t_H$ polynomial in $\log q$, and Theorem~\ref{thm:rapidmixgeneral}, or even the less general results of~\cite{jao-miller-venkatesan,JMV09}, shows that $\mathscr G$ has the rapid mixing property. The smaller bound $O(\log(d_KN\mathfrak f)^2)$ was used in Galbraith's approach; the induced graph is then connected, but is not an expander, therefore some additional heuristic assumptions were required for the analysis.

For Jacobians of genus 2 curves, one can choose $H = \mathscr P(\cO)$, and $\mathcal S$ to be a generating set of ideals of prime norms bounded by a bound $B = O(\left(h_{\cO_0}^+\log q)^{2 + \varepsilon}\right)$, where $\cO_{0} = \cO \cap K_0$.
As seen in Section~\ref{subsec:DJR}, the corresponding isogenies can then be computed using the DJR algorithm when $\cO_0$ is maximal and $[\cO:\Z[\pi,\bar\pi]]$ is odd.

Write $h = |H|$. The idea is to find $h^{1/2}$ varieties ``close" to $\mathscr  A$ (in the sense that we know a path of polynomial length from these to $\mathscr  A$), and then to build paths out of $\mathscr  B$ until one of the neighbors of $\mathscr  A$ is reached. In practice one could simply use the same tree-growing strategy as Galbraith~\cite{Gal99}, but the analysis of our algorithm requires the various random paths to be independent in order to use the expanding properties (and this independence misses in the ``tree" approach). The algorithm goes as follows, presented in the most general setting.
\begin{enumerate}
\item[\textbf{Step 1}] Build independent random paths in $\mathscr G$ of length $\ln(2h)$ from $\mathscr  A$ until $h^{1/2}$ vertices are reached. Those are the \emph{neighbors} of $\mathscr  A$.
\item[\textbf{Step 2}] Build independent random paths of length $\ln(2h)$ from $\mathscr  B$ until a neighbor of $\mathscr  A$ is reached. There is now a short path between $\mathscr  A$ and $\mathscr B$.
\end{enumerate}

Now, let us prove that the number of paths considered at each step is on average $O(h^{1/2})$. Let $Y$ be a subset of the vertices of $\mathscr G$, smaller than $2h/3$. By a \emph{trial}, we mean the computation of a random path of length $\ln(2h)$ from of $A$, and a trial is a \emph{success} if the path ends out of $Y$. Let us estimate the number $N_Y$ of independent trials we need to obtain a success,
\begin{equation*}
\mathbf E[N_Y] = \sum_{i=1}^{\infty} i\Pr [i-1\text{ failures and 1 success}]
 \leq \sum_{i=1}^{\infty} i \left(\frac{3|Y|}{2h}\right)^{i-1},
\end{equation*}
and from the generating function $(1-x)^{-2} = \sum_{i=0}^{\infty}(i+1)x^i$, we obtain the inequality\begin{alignat*}{1}
\mathbf E[N_Y] & \leq \left(1-\frac{3|Y|}{2h}\right)^{-2} = \frac{4h^2}{(2h - 3|Y|)^2}.
\end{alignat*}
Now consider the experiment consisting in a sequence of independent trials, and let $Y_n$ be the first $n$ distinct points obtained from the first experiments. The number $M_n$ of trials required to find $n$ distinct points can be estimated as
\begin{alignat*}{1}
\mathbf E[M_n] = \sum_{i=1}^{n-1}\mathbf E[N_{Y_{i}}]\leq\sum_{i=1}^{n-1}\frac{4h^2}{(2h - 3i)^2} \leq \frac{4nh^2}{(2h - 3n)^2}.
\end{alignat*}
In particular, to find $h^{1/2}$ neighbors of $\mathscr A$, the expected number of trials 
$
{\mathbf E[M_{h^{1/2}}] 
}
$
is at most $4h^{1/2}$,
assuming that $h$ is at least 9. Of course, in practice, we expect to need much less trials since we count here only the end point of each path. This proves that the expected number of paths we have to compute in Step 1 is $O\left(h^{1/2}\right)$.

The expected number of paths considered in Step 2 can be found to be $O\left(h^{1/2}\right)$ in a similar fashion.
In total, we build $O\left(h^{1/2}\right)$ paths of length $O(\ln h)$. So the algorithm needs to compute $O\left(h^{1/2}\ln h\right)$ isogenies, each of them being computable in time $t_H$, and finds a path of length $O(\ln h)$ between $\mathscr  A$ and $\mathscr  B$.

\section*{Acknowledgements}
We thank Emmanuel Kowalski, Philippe Michel, Ken Ribet and Damien Robert for useful conversations.
The first author was supported by the Swiss National Science Foundation. The second author was supported by the Swiss National Science Foundation under grant number 200021-156420.

\bibliographystyle{amsalpha}
\def\cprime{$'$}
\providecommand{\bysame}{\leavevmode\hbox to3em{\hrulefill}\thinspace}
\providecommand{\MR}{\relax\ifhmode\unskip\space\fi MR }
\providecommand{\MRhref}[2]{%
  \href{http://www.ams.org/mathscinet-getitem?mr=#1}{#2}
}
\providecommand{\href}[2]{#2}

\end{document}